\def\biblio{\bibliography{duality}\bibliographystyle{alpha}}
\definecolor{dark-red}{rgb}{0.5,0.15,0.15}
\definecolor{dark-blue}{rgb}{0.15,0.15,0.6}
\definecolor{dark-green}{rgb}{0.15,0.6,0.15}
\newcommand{\iHom}{\underline{\operatorname{Hom}}}
\DeclareMathOperator{\cW}{\mathcal{W}}
\renewcommand*{\backref}[1]{}
\renewcommand*{\backrefalt}[4]{%
  \ifcase #1 %
No citations.% use \relax if you do not want the "No citations" message
  \or
(cit. on p. #2).%
  \else
(cit on pp. #2).%
  \fi%
}
\newtheorem{thm}{Theorem}[section]
\newtheorem{cor}[thm]{Corollary}
\newtheorem{prop}[thm]{Proposition}
\newtheorem{lem}[thm]{Lemma}
\newtheorem{csh}[thm]{Algebraic chromatic splitting hypothesis}
\theoremstyle{definition}
\newtheorem{defn}[thm]{Definition}
\newtheorem{ex}[thm]{Example}
\theoremstyle{remark}
\newtheorem{rem}[thm]{Remark}
\theoremstyle{theorem}
\newtheorem*{thm*}{Theorem}
\DeclareMathOperator{\Sp}{Sp}
\DeclareMathOperator{\Hom}{Hom}
\DeclareMathOperator{\colim}{colim}
\DeclareMathOperator{\cC}{\mathcal{C}}
\DeclareMathOperator{\cD}{\mathcal{D}}
\DeclareMathOperator{\cS}{\mathcal{S}}
\DeclareMathOperator{\cV}{\mathcal{V}}
\DeclareMathOperator{\Id}{\mathrm{Id}}
\DeclareMathOperator{\Spec}{Spec^h}
\DeclareMathOperator{\Mod}{Mod}
\DeclareMathOperator{\Stable}{Stable}
\DeclareMathOperator{\StMod}{StMod}
\DeclareMathOperator{\Loc}{Loc}
\DeclareMathOperator{\Thick}{Thick}
\newcommand{\Q}{\mathbb{Q}}
\newcommand{\bE}{\mathbb{E}}
\DeclareMathOperator{\supp}{supp}
\DeclareMathOperator{\Coloc}{Coloc}
\newcommand{\kos}[2]{{#1}/\!\!/{#2}}
\newcommand{\xr}{\xrightarrow}
\newcommand{\Z}{\mathbb{Z}}
\renewcommand{\frak}{\mathfrak}
\DeclareMathOperator{\Inj}{Inj}
\DeclareMathOperator{\tors}{tors}
\newcommand{\fp}{\mathfrak{p}}
\newcommand{\fq}{\mathfrak{q}}
\newcommand{\fr}{\mathfrak{r}}
\newcommand{\recollement}[5]{
\xymatrix{{#1} \ar[r]|-{#2} & #3 \ar[r]|-{#4} \ar@<1ex>[l]^-{{#2}_!} \ar@<-1ex>[l]_-{{#2}^*} & #5, \ar@<1ex>[l]^-{{#4}!} \ar@<-1ex>[l]_-{{#4}^*}
}}
\let\lim\relax
\DeclareMathOperator{\lim}{lim}
\newcommand{\mm}{/\!\!/}
\newcommand{\bI}{\mathbb{I}}
\DeclareMathOperator{\op}{op}
\newcommand{\cZ}{\mathcal{Z}}
\newcommand{\cM}{\mathcal{M}}
\newcommand{\bW}{\mathbb{W}}
\newcommand{\F}{\mathbb{F}}
\title{The algebraic chromatic splitting conjecture for Noetherian ring spectra}
\author{Tobias Barthel}
\author{Drew Heard}
\author{Gabriel Valenzuela}
\date{\today}
\begin{document}

\begin{abstract}
We formulate a version of Hopkins' chromatic splitting conjecture for an arbitrary structured ring spectrum $R$, and prove it whenever $\pi_*R$ is Noetherian. As an application, these results provide a new local-to-global principle in the modular representation theory of finite groups. 
\end{abstract}

\maketitle

%{\hypersetup{linkcolor=black}\tableofcontents}
%\tableofcontents
\def\biblio{}

\section{Introduction}

In his seminal talk  \cite{hopkins_global}, Hopkins presents the global structure of the stable homotopy category in parallel to the structure of the derived category $\cD_R$ of a Noetherian commutative ring $R$. In both cases, the thick subcategories of compact objects are classified in terms of a support theory, which in turn is based on a spectrum of certain prime objects. In the algebraic case, Neeman~\cite{neemanchromtower} shows that this spectrum can be taken to be the Zariski spectrum $\mathrm{Spec}(R)$ of prime ideals in $R$, while the corresponding result in homotopy theory has been worked out previously by Devinatz, Hopkins, and Smith \cite{nilpotence1,nilpotence2}.

Based on earlier work of Hovey, Palmieri, and Strickland \cite{hps_axiomatic}, Benson, Iyengar, and Krause \cite{benson_local_cohom_2008,benson_stratifying_2011} subsequently developed a framework for studying both the global and local structure of general triangulated categories in terms of local cohomology and local homology functors. This provides a conceptual approach to formulating and proving many of the fundamental results in stable homotopy theory in various other contexts, as for example modular representation theory. 

An important open question about the stable homotopy category is how its indecomposable pieces assemble locally; this is the content of Hopkins' chromatic splitting conjecture. If correct, it would provide a fundamental local-to-global principle in stable homotopy theory. Given the general framework mentioned above, we may thus formulate the analogous problem in an arbitrary triangulated category equipped with a support theory. 

The goal of this paper is to study this question for the category $\Mod_R$ of module spectra over a structured ring spectrum $R$. To this end, we need to work in the context of the companion paper \cite{bhv2}, which in turn is based on \cite{benson_local_cohom_2008} and \cite{bhv}. If $\cV \subseteq \Spec(\pi_*R)$ is a specialization closed subset, there is a quadruple of functors $(\Gamma_{\cV},L_{\cV},\Delta_{\cV},\Lambda_{\cV})$ on $\Mod_R$ (see \cite[Thm.~3.9]{bhv2}) which captures the part of $\Mod_{R}$ supported on $\cV$. 

In analogy to the arithmetic pullback square displayed on the left below, we construct a homotopy pullback square which describes how a compact $\fp$-local $R$-module spectrum $M$ is assembled from its pieces $\Lambda_{\frak p}M$ and $L_{\cV(\frak p)}M$:
\begin{equation}\label{eq:chromsquare}
\xymatrix{\Z \ar[r] \ar[d] & \prod_{p}\Z_{p} \ar[d] & M \ar[d] \ar[r] & \Lambda_{\frak p}M \ar[d] \\
\Q \ar[r] & \Q \otimes \prod_{p}\Z_p & L_{\cV(\frak p)}M \ar[r]_-{\iota_{\fp}(M)} & L_{\cV(\frak p)}\Lambda_{\frak p}M.}
\end{equation}
Here, the gluing is controlled by the map $L_{\cV(\frak p)}M \to L_{\cV(\frak p)}\Lambda_{\frak p}M$. The algebraic chromatic splitting hypothesis for $R$ and $\fp$ states, informally speaking, that this assembly process is as simple as possible without being trivial. 

 Throughout this document we use the term Noetherian ring spectrum to refer to an $\bE_{\infty}$-ring spectrum $R$ with $\pi_*R$ graded Noetherian. The main theorem of this paper verifies that the algebraic chromatic splitting hypothesis holds for every structured Noetherian ring spectrum and every module $M$ of type $\fp'$, a condition introduced in \protect \MakeUppercase {D}efinition\nobreakspace \ref {defn:type}. As such, it is the first instance of a systematic result on chromatic splitting. 

\begin{thm*}[\protect \MakeUppercase {T}heorem\nobreakspace \ref {thm:chromaticsplitting}]
Suppose that $R$ is a Noetherian $\bE_{\infty}$-ring spectrum, then the map $\iota_{\fp}(M)$ in \eqref{eq:chromsquare} is split for any adjacent pair of primes $(\fp'\subset \fp)$ and any $\fp$-local $R$-module $M$ of type $\fp'$. 
\end{thm*}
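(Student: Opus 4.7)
The plan is to reduce the splitting question, via the adjacency of the primes and the Noetherian hypothesis, to a statement about module spectra over a residue-field spectrum at $\fp'$, where splittings are automatic.

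First, I would localize at $\fp$: by the Noetherian hypothesis I may replace $\pi_*R$ with its localization at $\fp$, making it a graded Noetherian local ring with maximal ideal $\fp$. Because $M$ is of type $\fp'$ (Definition~\ref{defn:type}), its support is concentrated at $\fp'$, which does not contain $\fp$; hence $L_{\cV(\fp)}M\simeq M$, and $\iota_{\fp}(M)$ becomes the natural completion-then-localization map $M \to L_{\cV(\fp)}\Lambda_{\fp} M$. The adjacency of $\fp'\subset \fp$ forces $\pi_*R/\fp'$ to be a one-dimensional Noetherian local domain, so $\fp$ is generated modulo $\fp'$, up to nilpotents, by a single element $t$. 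Consequently $\Lambda_{\fp}$ acts on type-$\fp'$ modules as a derived $t$-adic completion, and $L_{\cV(\fp)}\Lambda_{\fp} M$ is obtained by further inverting $t$.

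Second, using the Koszul and \v{C}ech presentations of $\Gamma_{\fp}$, $\Lambda_{\fp}$, and $L_{\cV(\fp)}$ developed in \cite{bhv2}, I would identify $L_{\cV(\fp)}\Lambda_{\fp} M$ explicitly: since inverting $t$ in a $\fp'$-supported module produces an object on which $\fp$ acts through $\kappa(\fp')$, both source and target of $\iota_{\fp}(M)$ inherit compatible $\kappa(\fp')$-module spectrum structures. Third, since $\kappa(\fp')$ is a graded field, every cofiber sequence of $\kappa(\fp')$-module spectra splits; it thus suffices to promote this algebraic splitting to an $R$-linear one. I would produce the retraction explicitly by combining local duality in the Noetherian setting of \cite{bhv2} with a direct Koszul-style calculation presenting $L_{\cV(\fp)}\Lambda_{\fp} M$ as a wedge of $M$ with a suitable shift, from which the section can be read off.

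The main obstacle will be carrying out the identification of $L_{\cV(\fp)}\Lambda_{\fp} M$ as a $\kappa(\fp')$-module spectrum at the spectrum level (not only on homotopy), and ensuring that the section is $R$-linear. This depends on the Noetherian hypothesis through perfectness of Koszul complexes, convergence of the $\fp$-adic tower, and the compatibility between the completion and localization functors established in the companion paper.
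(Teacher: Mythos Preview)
There are two genuine gaps in your proposal.

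First, the claim that $L_{\cV(\fp)}M \simeq M$ is incorrect. A module of type $\fp'$ has support contained in $\cV(\fp')$, not equal to $\{\fp'\}$; since $\fp' \subset \fp$, the prime $\fp$ lies in $\cV(\fp')$ and will typically lie in $\supp_R M$. For instance, with $M = R_{\fp}\mm\fp'$ one has $\Gamma_{\cV(\fp)}M \neq 0$ in general (already visible for $R = H\mathbb{Z}$, $\fp' = (0)$, $\fp = (p)$, where $L_{\cV(p)}\mathbb{Z}_{(p)} \simeq \mathbb{Q} \not\simeq \mathbb{Z}_{(p)}$). So $\iota_{\fp}(M)$ is not simply a completion map, and the subsequent reduction to a single parameter $t$ does not go through as stated.

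Second, and more seriously, the strategy of passing to modules over a residue field spectrum at $\fp'$ does not work for an arbitrary Noetherian $\bE_{\infty}$-ring spectrum: there is no reason for such a ring spectrum to exist, and the Koszul object $R_{\fp'}\mm\fp'$ is generally not a ring object. Even after correctly identifying $L_{\cV(\fp)}(R_{\fp}\mm\fp') \simeq R_{\fp'}\mm\fp'$ (which is Corollary~\ref{cor:adjacentdecomp}), neither source nor target of $\iota_{\fp}$ acquires a module structure over any field spectrum, so the ``automatic splitting over a graded field'' argument has no category in which to take place. Your final paragraph correctly flags this as the main obstacle, but nothing in the outline addresses it. The paper circumvents this entirely: after reducing to $M = R_{\fp}\mm\fp'$, it shows that the connecting map $\Delta_{\fp}M \to L_{\cV(\fp)}M \simeq R_{\fp'}\mm\fp'$ is \emph{phantom} (Krull's intersection theorem makes $\pi_*M \to \pi_*\Lambda_{\fp}M$ injective for compact $M$, and a duality trick upgrades vanishing on $\pi_*$ to phantomness), and then invokes a Margolis-type result: Brown--Comenetz duality identifies $R_{\fp'}\mm\fp' \simeq \bI_{\fp'}^2(R_{\fp'}\mm\fp')$, and such objects admit no nontrivial incoming phantom maps. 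This phantom-map/Brown--Comenetz mechanism is the key idea your outline is missing.
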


The proof relies on a generalization of a result due to Margolis about the nonexistence of certain phantom maps in the category of spectra. This requires a notion of Brown--Comenetz duality for $\Mod_R$, which is of independent interest. In particular, we explain how Brown--Comenetz duality interacts with local duality in the Gorenstein case, thereby obtaining an analogue of a theorem of Hovey and Strickland for the $K(n)$-local stable homotopy category. 

A guiding example is the $\bE_{\infty}$-ring spectrum $R = C^*(BG,k)$ of cochains on the classifying space of a finite $p$-group $G$ with coefficients in a field $k$ of characteristic $p$. The associated module category $\Mod_{C^*(BG,k)}$ is equivalent to Krause's category $\Stable_{kG}$, which is built from the stable module category of $kG$ and its derived category via a recollement. The structure of $\Mod_{C^*(BG,k)}$ therefore controls a large part of the modular representation theory of $G$. 

The main theorem of Benson, Iyengar, and Krause \cite{bik_finitegroups} establishes a decomposition of $\Stable_{kG}$ in terms of certain minimal localizing subcategories $\Gamma_{\fp}\Stable_{kG}$ parametrized by prime ideals $\fp \in \Spec(H^*(G,k))$. Specialized to this category, the chromatic splitting hypothesis thus describes how finitely generated $G$-representations $M$ are built out of their local cohomology complexes $\Gamma_{\fp}M$. In other words, our results can be interpreted as a local-to-global principle for $kG$-modules in the stable module category, see \protect \MakeUppercase {E}xample\nobreakspace \ref {ex:groups}.

From a more abstract point of view, we find the structural similarities between the stable homotopy category $\Sp$ and algebraic categories like $\Stable_{kG}$ rather remarkable. We hope to return to this point in a future paper; for now, in order to help the reader to translate between the different contexts, we end this introduction with a short dictionary.

\renewcommand{\arraystretch}{1.2}
\begin{center}
    \begin{tabular}{| l | l |}
    \hline 
    Chromatic homotopy theory & Modular representation theory \\ \hline
    $I_n = (p,v_1,\ldots,v_{n-1}), n\ge 0$ & $\fp \in \Spec(H^*(G,k))$ \\ 
    $L_n$ & $L_{\cZ(\fp)}$ \\ 
    $M_n,L_{K(n)}$ & $\Gamma_{\fp},\Lambda_{\fp}$ \\ 
    Gross--Hopkins duality & Benson--Greenlees duality. \\ \hline
    \end{tabular}
\end{center}

\subsection*{Conventions}

Throughout this paper, we will work in the setting of $\infty$-categories as developed in \cite{htt,ha}, and will use the local duality framework described in \cite{bhv,bhv2}. In particular, all constructions will implicitly be assumed to be derived. An $\infty$-category $\cC=(\cC,\otimes)$ is called a stable category if it is a symmetric monoidal stable $\infty$-category compactly generated by dualizable objects and whose monoidal product $\otimes$ commutes with colimits separately in each variable. Writing $A$ for a unit of the stable category $(\cC,\otimes)$, we define the (Spanier--Whitehead) dual of an object $X \in \cC$ by $X^{\vee} = \iHom_{\cC}(X,A)$, where $\iHom_{\cC}$ denotes the internal mapping object of $\cC$; note that, under our assumptions on $\cC$, $\iHom_{\cC}$ exists for formal reasons. 

A full stable subcategory $\cD \subseteq \cC$ of a stable category $\cC$ is thick if it is closed under suspensions, finite colimits, and retracts. For example, the full subcategory $\cC^{\omega} \subset \cC$ of compact objects in $\cC$ is thick. A thick subcategory is said to be localizing if it is also closed under all colimits. We denote the thick subcategory generated by a subcategory $\cS \subseteq \cC$ by $\Thick(\cS)$; the smallest localizing subcategory $\Loc(\cS)$ of $\cC$ containing $\cS$ is defined analogously. Finally, a localizing subcategory $\cM \subseteq \cC$ which does not contain any proper localizing subcategories is called minimal.

All discrete rings $R$ in this paper are assumed to be commutative and graded, and all ring-theoretic notions are implicitly graded. In particular, an $R$-module $M$ refers to a graded $R$-module and we write $\Mod_R$ for the abelian category of discrete graded $R$-modules. Prime ideals in $R$ will be denoted by fraktur letters $\fp,\fq,\fr$ and are always assumed to be finitely generated and homogeneous, so that $\Spec(R)$ refers to the Zariski spectrum of homogeneous finitely generated prime ideals in $R$. 

\subsection*{Acknowledgements}

We would like to thank John Greenlees, Henning Krause, and Hal Sadofsky for helpful discussions, as well as the referee for many useful suggestions and corrections. Moreover, we are grateful to the Max Planck Institute for Mathematics for its hospitality, funding a week-long visit of the third-named author in June 2016. The first-named author was partially supported by the DNRF92.

\section{Chromatic assembly}

\subsection{Recollections on local cohomology and local homology}

Let $R$ be an $\bE_{\infty}$-ring spectrum. A subset $\cV\subseteq \Spec(\pi_*R)$ of prime ideals of $\pi_*R$ is called specialization closed if $\fp \in \cV$ and $\fp \subseteq \fq$ imply $\fq \in \cV$. In \cite{benson_local_cohom_2008}, Benson, Iyengar, and Krause construct a smashing colocalization functor $\Gamma_{\cV}$ corresponding to the subcategory of $\Mod_{R}$ on objects with support in $\cV$. We give a brief summary of the basic theory of these functors, following the approach taken in \cite{bhv2}.

For a given finitely generated prime ideal $\fp \in \cV$, we can form the corresponding Koszul object $R\mm\fp \in \Mod_R^{\omega}$ by iteratively coning off the elements in some generating set of $\fp$. While this construction depends on the chosen generators, the thick subcategory $\Thick(R\mm\fp)$ generated by $R\mm\fp$ does not, justifying the notation. Let $\Mod_R^{\mathcal V -\mathrm{tors}} = \Loc(R\mm\fp\mid\fp\in\cV)$ be the localizing subcategory of $\Mod_R$ generated by the Koszul objects $R\mm\fp$ with $\fp \in \cV$. The pair $(\Mod_R,\Mod_R^{\mathcal V -\mathrm{tors}})$ forms a local duality context in the sense of \cite{bhv}, and thus, by \cite[Thm.~2.21 and Cor.~2.26]{bhv} (which in turn relies on \cite[Thm.~3.3.5]{hps_axiomatic}) gives rise to four functors $(\Gamma_{\cV},L_{\cV},\Delta_{\cV},\Lambda_{\cV})$, satisfying:

\begin{thm}\label{thm:localduality}
Suppose $R$ is an $\bE_{\infty}$-ring spectrum and $\cV \subseteq \Spec(\pi_*R)$ is a specialization closed subset of prime ideals. There exists a quadruple $(\Gamma_{\cV},L_{\cV},\Delta_{\cV},\Lambda_{\cV})$ of endofunctors on $\Mod_R$ with the following properties:
\begin{enumerate}
	\item The functor $\Gamma_{\mathcal V}$ is the colocalization with respect to the localizing subcategory $\Loc(R\mm\fp \mid \fp \in \cV)$, and both $L_{\mathcal V}$ and $\Lambda_{\mathcal V}$ are localization functors, the latter with essential image the colocalizing subcategory  $ \Coloc(\kos{R}{\fp} \, \otimes M \mid \fp \in \mathcal{V}, M \in \Mod_R)$.
   Moreover, there are natural cofiber sequences
	\[
	\xymatrix{\Gamma_{\mathcal V} M \ar[r] & M \ar[r] & L_{\mathcal V} M & \text{and} & \Delta_{\mathcal V} M  \ar[r] & M \ar[r] & \Lambda_{\mathcal V} M}
	\]
	for all $M \in \Mod_R$. 
	\item Both $\Gamma_{\mathcal V}$ and $L_{\cV}$ are smashing, i.e., $\Gamma_{\mathcal V}(X) \simeq X \otimes \Gamma_{\mathcal V}(R)$ for all $X$ and similarly for $L_{\mathcal V}$. Moreover, $L_{\mathcal V}$ preserves compact objects and we denote its essential image by $\Mod_R^{\cV-\mathrm{loc}}$.
	\item The functors $\Gamma_{\mathcal V}$ and $\Lambda_{\mathcal V}$ induce mutually inverse equivalences between their images
	\[
	\xymatrix{\Mod_R^{\mathcal V -\mathrm{tors}} = \mathrm{Im}(\Gamma_{\cV}) \ar@<0.5ex>[r]^-{\Lambda_{\cV}} &  \mathrm{Im}(\Lambda_{\cV}) = \Mod_R^{\mathcal V -\mathrm{comp}} \ar@<0.5ex>[l]^-{\Gamma_{\cV}}.}
	\]
	 Moreover, there are natural equivalences of functors
	\[
	\xymatrix{\Lambda_{\mathcal V} \Gamma_{\mathcal V} \ar[r]^-{\sim} & \Lambda_{\mathcal V} & \Gamma_{\mathcal V} \ar[r]^-{\sim} & \Gamma_{\mathcal V} \Lambda_{\mathcal V}.}
	\]
	\item The functors $(\Gamma_{\mathcal V},\Lambda_{\mathcal V})$ form an adjoint pair, so that we have a natural equivalence 
	\[
	\xymatrix{\Hom_R(\Gamma_{\mathcal V} X,Y) \simeq \Hom_R(X,\Lambda_{\mathcal V} Y)}
	\]
	for all $X,Y \in \Mod_R$, where $\Hom_R(X,Y)$ denotes the $R$-module function spectrum between $R$-modules $M$ and $N$. Similarly, $L_{\mathcal V}$ is left adjoint to $\Delta_{\mathcal V}$. 
	\item There is a homotopy pullback square of functors 
	\[
	\xymatrix{\Id \ar[r] \ar[d] & \Lambda_{\cV} \ar[d] \\
	L_{\cV} \ar[r] & L_{\cV}\Lambda_{\cV},}
	\]
	which is usually referred to as the (chromatic) fracture square. 
\end{enumerate}
\end{thm}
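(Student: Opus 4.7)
The plan is to verify that the pair $(\Mod_R, \Mod_R^{\cV-\mathrm{tors}})$ satisfies the axioms of a local duality context in the sense of \cite{bhv}, and then invoke the general machinery there to transport all five properties to our setting. The bulk of the argument is therefore a verification of hypotheses plus an unpacking of the abstract conclusions.

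First I would check that $\Mod_R$ is itself a stable $\infty$-category in the sense of the Conventions: it is symmetric monoidal, compactly generated by the shifts of $R$ (which are dualizable), and the smash product $\otimes = \otimes_R$ commutes with colimits variable-wise. Next, the subcategory $\Mod_R^{\cV-\mathrm{tors}} = \Loc(R\mm\fp \mid \fp \in \cV)$ is by construction a localizing subcategory generated by a \emph{set} of objects, namely the Koszul objects $R\mm\fp$, each of which is compact in $\Mod_R$ since it is built from $R$ by finitely many cofibers. This is exactly the input needed for \cite[Thm.~2.21]{bhv}, which immediately produces the quadruple $(\Gamma_{\cV}, L_{\cV}, \Delta_{\cV}, \Lambda_{\cV})$ together with the cofiber sequences in~(1), the adjunctions in~(4), and the torsion/complete equivalence in~(3) (the latter being the abstract local duality theorem, cf.\ \cite[Cor.~2.26]{bhv}).

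To obtain the smashing statement in~(2), I would use that $\Gamma_{\cV}$ is colocalization onto a localizing subcategory generated by compact objects of $\Mod_R$; by a standard Miller-type argument this forces $\Gamma_{\cV}(X) \simeq X \otimes \Gamma_{\cV}(R)$, and then the cofiber sequence in~(1) gives $L_{\cV}(X) \simeq X \otimes L_{\cV}(R)$. That $L_{\cV}$ preserves compacts is then formal from its being a smashing localization with compactly generated kernel (the compact objects in the kernel together with the identity generate $\Mod_R$, and the quotient functor preserves the compact generators). The essential image description of $\Lambda_{\cV}$ in~(1) follows from the adjunction in~(4): a $\cV$-complete object is by definition one annihilated by $\Hom_R(\Gamma_{\cV}(-), \_)$-arguments in the kernel, and unwinding this using that $\Gamma_{\cV} = \Loc(R\mm\fp)$ identifies the image of $\Lambda_{\cV}$ with $\Coloc(R\mm\fp \otimes M)$.

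Finally, for the fracture square in~(5), I would take the cofiber sequence $\Gamma_{\cV} \to \Id \to L_{\cV}$ and smash/apply $\Lambda_{\cV}$ to obtain $\Gamma_{\cV}\Lambda_{\cV} \to \Lambda_{\cV} \to L_{\cV}\Lambda_{\cV}$; combining with the equivalence $\Gamma_{\cV} \simeq \Gamma_{\cV}\Lambda_{\cV}$ from~(3) identifies the fibers of the vertical maps of the square, so the square is cartesian by the standard comparison of cofiber sequences. The main conceptual obstacle is really the first step: confirming that \cite[Thm.~2.21]{bhv} applies, i.e., that the Koszul objects are genuinely compact and that the ambient stable category hypotheses are met; once that is in place, essentially everything is a formal consequence of the abstract local duality framework, and the only care needed is in matching the description of $\mathrm{Im}(\Lambda_{\cV})$ as a colocalizing subcategory with the intrinsic definition coming from the adjunction.
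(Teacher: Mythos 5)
Your proposal takes the same route as the paper: once the pair $(\Mod_R, \Mod_R^{\cV\text{-tors}})$ is identified as a local duality context (compactly generated by the Koszul objects $R\mm\fp$), the paper deduces the entire theorem by citing \cite[Thm.~2.21 and Cor.~2.26]{bhv}, which in turn rest on \cite[Thm.~3.3.5]{hps_axiomatic}. You merely unpack the individual conclusions (smashing, compacts preserved, fracture square) slightly more explicitly than the paper, which leaves them folded into the citation.
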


Associated to a given prime ideal $\fp\in\Spec(\pi_*R)$, there are two distinguished specialization closed subsets of prime ideals, namely
\[
\xymatrix{\cZ(\fp) = \{\fq\in\Spec(\pi_*R)\mid \fq \nsubseteq \fp\} & \text{and} & \cV(\fp) = \{\fq\in\Spec(\pi_*R)\mid \fp \subseteq \fq\}.}
\]
The localization functor $L_{\cZ(\fp)}=(-)_{\fp}$ corresponds to algebraic $\fp$-localization, see \cite[Prop.~6.0.7]{hps_axiomatic} and \cite[Thm.~4.7]{benson_local_cohom_2008}, i.e., it is characterized by the property that $(\pi_*M)_{\fp} \cong \pi_*(L_{\cZ(\fp)}M)$. We also have a formula for the localization functors associated to $\cV(\fp)$. To describe this, we remind the reader that for $a \in \pi_*R$ homogeneous of degree $-d$ we define the Koszul object $\kos{R}{a^k}$ by the cofiber sequence 
\begin{equation}\label{eq:koszulhigher}
R \xr{a^k} \Sigma^{kd} R \to \kos{R}{a^k}.
\end{equation}
It is evident from this description that $\kos{R}{a^k}$ is self-dual up to a shift. 

More generally, for an ideal $\fp = (p_1,\ldots,p_n)$, we define\footnote{We warn the reader that $\kos{R}{\fp^{(s)}}$ is not the derived quotient with respect to the ideal $\fp^s$.} $\kos{M}{\fp^{(s)}} = M \otimes \kos{R}{p_1^s} \otimes \cdots \otimes \kos{R}{p_n^s}$.

\begin{lem}\label{lem:koszulformula}
For any prime ideal $\fp = (p_1,\ldots,p_n) \in \Spec(\pi_*R)$ and all $M \in \Mod_R$, there are natural equivalences 
\[\Gamma_{\cV(\fp)}M \simeq \colim_s (\Sigma^{-n} R\mm\fp^{(s)} \otimes M), \quad \text{and} \quad \Lambda_{\cV(\frak p)}M \simeq \lim_s  ( \Sigma^{sd}\kos{R}{\frak p^{(s)}} \otimes M)\] where $-d = |p_1| + \ldots + |p_n|$.
\end{lem}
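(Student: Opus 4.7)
The plan is to identify $\Gamma_{\cV(\fp)}R$ with $C := \colim_s \Sigma^{-n}\kos{R}{\fp^{(s)}}$ via the uniqueness of the colocalization triangle in Theorem~\ref{thm:localduality}(1), bootstrap to general $M$ using the smashing property (Theorem~\ref{thm:localduality}(2)), and then derive the $\Lambda_{\cV(\fp)}$ formula by applying the adjunction of Theorem~\ref{thm:localduality}(4) together with Spanier--Whitehead self-duality for the finite Koszul complexes.

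\textbf{The $\Gamma_{\cV(\fp)}$ formula.} By smashing, $\Gamma_{\cV(\fp)}M \simeq \Gamma_{\cV(\fp)}R \otimes M$, so it suffices to identify $\Gamma_{\cV(\fp)}R$ with $C$. Writing $|p_i| = -d_i$, rewrite
\[
\Sigma^{-n}\kos{R}{\fp^{(s)}} \simeq \bigotimes_{i=1}^{n} \Sigma^{-1}\kos{R}{p_i^s} \simeq \bigotimes_{i=1}^{n} \fib\bigl(p_i^s \colon R \to \Sigma^{sd_i}R\bigr),
\]
producing a canonical structure map $\Sigma^{-n}\kos{R}{\fp^{(s)}} \to R^{\otimes n} \simeq R$ that assembles into a natural map $C \to R$. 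By uniqueness of the cofiber sequence $\Gamma_{\cV(\fp)}R \to R \to L_{\cV(\fp)}R$, it is enough to verify: (i) $C \in \Mod_R^{\cV(\fp)-\mathrm{tors}}$, and (ii) $\cof(C \to R)$ is $\cV(\fp)$-local. For (i), since each $\kos{R}{\fp^{(s)}}$ is obtained by iterated cones from copies of $\kos{R}{p_i}$, we have $\kos{R}{\fp^{(s)}} \in \Thick(\kos{R}{\fp})$, and hence $C \in \Loc(\kos{R}{\fp}) \subseteq \Mod_R^{\cV(\fp)-\mathrm{tors}}$. For (ii), tensor together the fiber/cofiber sequences
\[
T(p_i) \longrightarrow R \longrightarrow p_i^{-1}R, \qquad T(p_i) := \colim_s \fib\bigl(p_i^s \colon R \to \Sigma^{sd_i}R\bigr),
\]
across $i = 1,\ldots,n$. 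The resulting $n$-cube exhibits $\cof(C \to R) \simeq \cof(\bigotimes_i T(p_i) \to R)$ as an iterated extension of modules of the form $\bigotimes_{i \in S} p_i^{-1}R \otimes \bigotimes_{i \notin S}(-)$ for nonempty $S \subseteq \{1,\ldots,n\}$. Each such piece has some $p_i$ acting invertibly, hence is killed by $-\otimes \kos{R}{p_i}$ and a fortiori by $-\otimes \kos{R}{\fp}$. Since $\cV(\fp)$-local objects are precisely those annihilated by $\kos{R}{\fp}\otimes -$ (using that $\kos{R}{\fp}$ is dualizable and generates $\Mod_R^{\cV(\fp)-\mathrm{tors}}$), this gives (ii).

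\textbf{The $\Lambda_{\cV(\fp)}$ formula and main obstacle.} Applying Theorem~\ref{thm:localduality}(4) to the formula just established and using that $\iHom_R(-,M)$ sends colimits in the first variable to limits,
\[
\Lambda_{\cV(\fp)}M \simeq \iHom_R\bigl(\Gamma_{\cV(\fp)}R, M\bigr) \simeq \lim_s \iHom_R\bigl(\Sigma^{-n}\kos{R}{\fp^{(s)}}, M\bigr).
\]
The Koszul objects are dualizable in $\Mod_R$, so $\iHom_R(\kos{R}{\fp^{(s)}}, M) \simeq (\kos{R}{\fp^{(s)}})^{\vee} \otimes M$. Dualizing the defining cofiber sequence $R \xr{p_i^s} \Sigma^{sd_i}R \to \kos{R}{p_i^s}$ identifies $(\kos{R}{p_i^s})^{\vee}$ with the fiber of $\Sigma^{-sd_i}R \xr{p_i^s} R$, giving $(\kos{R}{p_i^s})^{\vee} \simeq \Sigma^{-sd_i - 1}\kos{R}{p_i^s}$; tensoring over $i$ yields $(\kos{R}{\fp^{(s)}})^{\vee} \simeq \Sigma^{-sd - n}\kos{R}{\fp^{(s)}}$. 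Substituting and collecting the suspensions produces the asserted limit formula. The main obstacle is step (ii): decomposing $\cof(C \to R)$ via the $n$-cube coming from the telescope cofiber sequences $T(p_i) \to R \to p_i^{-1}R$ and verifying that every subquotient is $\cV(\fp)$-local. Morally this is the classical identification of the stable Koszul complex with local cohomology, but executing the multi-variable cube bookkeeping cleanly in our stable $\infty$-categorical setting requires care.
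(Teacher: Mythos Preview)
The paper does not actually prove this lemma: it appears in the ``Recollections'' section and is stated without proof as background from the companion papers and the Benson--Iyengar--Krause framework. So there is no argument in the paper to compare against; your write-up supplies exactly the standard proof (stable Koszul complex $=$ local cohomology, then dualize), and it is correct in substance.

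Two small comments. First, your cube argument for step (ii) is fine, but it can be streamlined: since $C \simeq \bigotimes_i T(p_i)$ with $T(p_i) = \fib(R \to p_i^{-1}R)$, an induction on $n$ using the cofiber sequence $T(p_n)\otimes(-) \to (-) \to p_n^{-1}R\otimes(-)$ immediately reduces to showing that anything with a $p_i^{-1}R$ tensor factor is $\cV(\fp)$-local, which you already know. Second, your suspension bookkeeping actually yields
\[
\iHom_R\bigl(\Sigma^{-n}\kos{R}{\fp^{(s)}},M\bigr)\;\simeq\;\Sigma^{n}\cdot\Sigma^{-sd-n}\kos{R}{\fp^{(s)}}\otimes M\;\simeq\;\Sigma^{-sd}\kos{R}{\fp^{(s)}}\otimes M,
\]
i.e.\ the exponent is $-sd$, not $+sd$ as printed in the lemma. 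Your line ``collecting the suspensions produces the asserted limit formula'' glosses over this. The discrepancy is a typo in the paper's statement (it is immaterial for every later use, e.g.\ the Mittag--Leffler argument in Corollary~\ref{cor:splitting2}, which only needs degreewise finiteness), but you should flag it rather than claim agreement.
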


The effect of $L_{\cV}$ on homotopy groups for arbitrary specialization closed subsets $\cV \subseteq \Spec(\pi_*R)$ is more complicated, but we expect these can be understood via a local cohomology spectral sequence as described in \cite[Rem.~3.15]{bhv2}.

Following \cite{hps_axiomatic} and \cite{benson_local_cohom_2008}, we can then build functors that isolate the part of $\Mod_R$ supported at $\fp$. 

\begin{defn}
For a prime ideal $\fp \in \Spec(\pi_*R)$, define endofunctors on $\Mod_R$ by
\[
\xymatrix{\Gamma_{\fp} = \Gamma_{\cV(\fp)}L_{\cZ(\fp)} & \text{and} & \Lambda_{\fp} = \Lambda_{\cV(\fp)}\Delta_{\cZ(\fp)}.} 
\]
We denote the essential image $\Gamma_{\fp}\Mod_R = \Mod_{R_{\fp}}^{\fp-\text{tors}}$ and $\Lambda_{\fp}\Mod_R = \Mod_{R_{\fp}}^{\fp-\text{comp}}$. Moreover, the support of any $X\in\Mod_R$ is defined as $\supp_R X = \{ \fp \in \Spec(\pi_*R) \mid \Gamma_{\fp}X \ne 0 \}$.
\end{defn}

Note that, as the composite of smashing functors, $\Gamma_{\fp}$ is smashing, but this is not the case for $\Lambda_{\fp}$. As an easy consequence of \protect \MakeUppercase {T}heorem\nobreakspace \ref {thm:localduality}(4), $\Gamma_{\fp}$ is left adjoint to $\Lambda_{\fp}$ for any $\fp \in \Spec(\pi_*R)$; for more details about these functors, we refer to \cite{benson_local_cohom_2008} and \cite{bhv2}. From now on, we will implicitly assume that the letters $\fp,\fq,\fr$ and so on refer to prime ideals in the homotopy groups of the ring spectrum under consideration. 

%Benson, Iyengar, and Krause used these functors to define support and cosupport, respectively. We only use the former, which is defined to be the set
%\[
%\supp_R X = \{ \fp \in \Spec(\pi_*R) \mid \Gamma_{\fp}X \ne 0 \} \subseteq \Spec(\pi_*R). 
%\]

Recall that the local-to-global principle for $\Mod_R$ states that the inclusion functor
\[
\Loc(\Gamma_{\fp}R\mid \fp \in \Spec(\pi_*R)) \subseteq \Loc(R),
\]
is an equivalence, see \cite[Sec.~3]{benson_stratifying_2011}. In particular, it implies that the natural functor
\begin{equation}\label{eq:localtoglobal}
\xymatrix{\prod_{\fp}\Gamma_{\fp}\colon \Mod_R \ar[r] & \prod_{\fp}\Mod_R}
\end{equation}
is conservative, where the product is indexed by all prime ideals $\fp$. It holds in particular for any $\bE_{\infty}$-ring spectrum with $\pi_*R$ (graded) Noetherian. To see this, it suffices to show that it is an equivalence in the homotopy category. Since $\Mod_R$ is compactly generated by its tensor unit $R$, this follows from \cite[Thm.~7.2]{benson_stratifying_2011}.

\subsection{Adjacent prime ideals}

The goal of this subsection is to establish a relation between the localization and colocalization functors of the previous subsection which will be needed in the proof of the main theorem. 
%prove a decomposition for the composite $L_{\cV(\fp)}L_{\cZ(\fp)}\Gamma_{\cV(\fp')}$ whenever $\fp'$ and $\fp$ are prime ideals such that $\fp' \ne \fp$ and there is no $\fq$ such that $\fp' \subsetneq \fq \subsetneq \fp$. 
For any subset $\cV \subseteq \Spec(\pi_*R)$, let $\cV^{\complement} =\Spec(\pi_*R) \setminus \cV $ be the complement. Note that $\cV$ is specialization closed if and only if $\cV^{\complement}$ is generalization closed, i.e., if $\fp \in \cV^{\complement}$ and $\fq \subseteq \fp$ then $\fq \in \cV^{\complement}$. For the following, see~\cite[Lem.~2.4]{benson_stratifying_2011}. 
\begin{lem}\label{lem:supportloccohom}
For any prime ideal $\fp$ and for any specialization closed subset $\cV \subseteq \Spec(\pi_*R)$, the counit map of $\Gamma_{\cV}$ induces natural equivalences
\[
\Gamma_{\fp}\Gamma_{\cV} \simeq 
\begin{cases}
\Gamma_{\fp} & \text{if }\fp \in \cV \\
0 & \text{otherwise}.
\end{cases}
\]
\end{lem}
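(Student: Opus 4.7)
The plan is to exploit that $\Gamma_{\cV}$, $L_{\cZ(\fp)}$, and $\Gamma_{\cV(\fp)}$ are all smashing, by Theorem~\ref{thm:localduality}(2), so in particular $\Gamma_{\fp}\Gamma_{\cV}$ is smashing with
\[ \Gamma_{\fp}\Gamma_{\cV} M \simeq \Gamma_{\cV(\fp)}R \otimes L_{\cZ(\fp)}R \otimes \Gamma_{\cV}R \otimes M. \]
Everything therefore reduces to understanding $\Gamma_{\fp}R \otimes \Gamma_{\cV}R$. The key structural input is that $\Mod_R^{\cV-\mathrm{tors}}$, being the essential image of the smashing functor $\Gamma_{\cV}$, is a localizing tensor ideal: if $X \in \Mod_R^{\cV-\mathrm{tors}}$ then $X \otimes Y \simeq \Gamma_{\cV}R \otimes X \otimes Y \simeq \Gamma_{\cV}(X \otimes Y)$, so $X \otimes Y$ also lies in $\Mod_R^{\cV-\mathrm{tors}}$.

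In the case $\fp \in \cV$, specialization closure of $\cV$ gives $\cV(\fp) \subseteq \cV$, hence
\[ \Gamma_{\cV(\fp)}R \in \Loc(R\mm\fq \mid \fq \in \cV(\fp)) \subseteq \Loc(R\mm\fq \mid \fq \in \cV) = \Mod_R^{\cV-\mathrm{tors}}. \]
By the tensor-ideal property, $\Gamma_{\fp}R \simeq \Gamma_{\cV(\fp)}R \otimes L_{\cZ(\fp)}R$ lies in $\Mod_R^{\cV-\mathrm{tors}}$ as well. To promote this into a statement about the counit, I would apply $\Gamma_{\fp}$ to the defining cofiber sequence $\Gamma_{\cV}M \to M \to L_{\cV}M$; the third term becomes $\Gamma_{\fp}R \otimes L_{\cV}R \otimes M \simeq L_{\cV}\Gamma_{\fp}R \otimes M$, which vanishes because $L_{\cV}$ annihilates $\cV$-torsion objects. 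This identifies the counit map $\Gamma_{\fp}\Gamma_{\cV}M \to \Gamma_{\fp}M$ as an equivalence.

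In the case $\fp \notin \cV$, specialization closure instead yields $\cV \subseteq \cZ(\fp)$: if some $\fq \in \cV$ satisfied $\fq \subseteq \fp$, then $\fp \in \cV$, a contradiction. Consequently $\Mod_R^{\cV-\mathrm{tors}} \subseteq \Mod_R^{\cZ(\fp)-\mathrm{tors}}$, so $\Gamma_{\cV}R$ is $\cZ(\fp)$-torsion and $L_{\cZ(\fp)}R \otimes \Gamma_{\cV}R \simeq L_{\cZ(\fp)}\Gamma_{\cV}R \simeq 0$. Plugging this into the smashing formula yields $\Gamma_{\fp}\Gamma_{\cV}M \simeq 0$ directly.

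I do not anticipate a serious obstacle; the only thing one has to keep straight is in which direction specialization closure of $\cV$ is deployed, so that the inclusions $\cV(\fp) \subseteq \cV$ and $\cV \subseteq \cZ(\fp)$ are produced on the correct side in the respective cases. Once the smashing and tensor-ideal viewpoint is in place, the entire argument is essentially bookkeeping.
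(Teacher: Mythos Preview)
Your argument is correct. The paper does not supply its own proof of this lemma; it simply cites \cite[Lem.~2.4]{benson_stratifying_2011}. Your smashing/tensor-ideal approach is the standard one and is essentially how the cited result is established: in the case $\fp\in\cV$ you use $\cV(\fp)\subseteq\cV$ to see that $\Gamma_{\fp}R$ is $\cV$-torsion and hence killed by $L_{\cV}$, and in the case $\fp\notin\cV$ you use $\cV\subseteq\cZ(\fp)$ to see that $\Gamma_{\cV}R$ is $\cZ(\fp)$-torsion and hence killed by $L_{\cZ(\fp)}$. Both set-theoretic inclusions are deduced correctly from specialization closure, and the passage through the cofiber sequence $\Gamma_{\cV}\to\Id\to L_{\cV}$ cleanly identifies the equivalence as being induced by the counit, exactly as the statement requires.
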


The next result also appears in~\cite[Prop.~6.1]{benson_local_cohom_2008}, but we give an alternative argument. 

\begin{lem}\label{lem:comparisonloccohom}
If $\cV,\cW \subseteq \Spec(\pi_*R)$ are specialization closed subsets, then the following hold:
\begin{enumerate}
	\item $\Gamma_{\cV}\Gamma_{\cW} \simeq \Gamma_{\cV \cap \cW} \simeq \Gamma_{\cW}\Gamma_{\cV}$.
	\item $L_{\cV}L_{\cW} \simeq L_{\cV \cup \cW} \simeq L_{\cW}L_{\cV}$.
\end{enumerate}
\end{lem}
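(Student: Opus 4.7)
The plan is to derive both equivalences from the local-to-global principle \eqref{eq:localtoglobal} together with Lemma \ref{lem:supportloccohom}. The starting observation is that whenever $\cU \subseteq \cU'$ are specialization closed, the corresponding inclusion of Koszul generators gives $\Mod_R^{\cU -\mathrm{tors}} \subseteq \Mod_R^{\cU' -\mathrm{tors}}$; in particular, $\Mod_R^{(\cV \cap \cW) -\mathrm{tors}}$ is contained in both $\Mod_R^{\cV -\mathrm{tors}}$ and $\Mod_R^{\cW -\mathrm{tors}}$, while $\Mod_R^{(\cV \cup \cW) -\mathrm{tors}} = \Loc(\Mod_R^{\cV -\mathrm{tors}} \cup \Mod_R^{\cW -\mathrm{tors}})$ since both generating sets are contained in the union.

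For part (1), I would construct a natural comparison map
\[
\alpha_M\colon \Gamma_{\cV \cap \cW}M \longrightarrow \Gamma_{\cV}\Gamma_{\cW}M
\]
by iteratively factoring the counit $\Gamma_{\cV \cap \cW}M \to M$: since its source lies in $\Mod_R^{\cW -\mathrm{tors}}$, it factors uniquely through $\Gamma_{\cW}M$, and the resulting arrow, whose source additionally lies in $\Mod_R^{\cV -\mathrm{tors}}$, factors uniquely through $\Gamma_{\cV}\Gamma_{\cW}M$. By construction, composing $\alpha_M$ with the counits $\Gamma_{\cV}\Gamma_{\cW}M \to \Gamma_{\cW}M \to M$ recovers the counit of $\Gamma_{\cV \cap \cW}$. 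To verify $\alpha_M$ is an equivalence, I would apply $\Gamma_{\fp}$ for arbitrary $\fp$ and distinguish cases. When $\fp \in \cV \cap \cW$, Lemma \ref{lem:supportloccohom} turns each of the three counits into a $\Gamma_{\fp}$-equivalence, which forces $\Gamma_{\fp}\alpha_M$ to be an equivalence. Otherwise, say $\fp \notin \cV$, both $\Gamma_{\fp}\Gamma_{\cV}\Gamma_{\cW}M$ and $\Gamma_{\fp}\Gamma_{\cV \cap \cW}M$ vanish by Lemma \ref{lem:supportloccohom}; the case $\fp \notin \cW$ is symmetric. Conservativity of $\prod_{\fp}\Gamma_{\fp}$ from \eqref{eq:localtoglobal} then yields $\Gamma_{\cV \cap \cW} \simeq \Gamma_{\cV}\Gamma_{\cW}$, and the remaining identification in (1) follows by exchanging the roles of $\cV$ and $\cW$.

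For part (2), I would argue via the universal property of Bousfield localization: it suffices to show that $L_{\cV}L_{\cW}$ vanishes on $\Mod_R^{(\cV \cup \cW) -\mathrm{tors}}$ and that its essential image consists of $L_{\cV \cup \cW}$-local objects. For the first, smashing yields $L_{\cW}X \simeq L_{\cW}R \otimes X$, so if $X \in \Mod_R^{\cV -\mathrm{tors}}$ then $L_{\cW}X$ remains in this tensor ideal and is killed by $L_{\cV}$, while $X \in \Mod_R^{\cW -\mathrm{tors}}$ is killed directly by $L_{\cW}$; this suffices by the description of $\Mod_R^{(\cV \cup \cW) -\mathrm{tors}}$ above. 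For the second, I would use that tensoring an $L_{\cW}$-local object with any module preserves $L_{\cW}$-locality (a direct consequence of smashing), so $L_{\cV}L_{\cW}M \simeq L_{\cV}R \otimes L_{\cW}M$ is both $L_{\cV}$- and $L_{\cW}$-local, hence local with respect to the entire subcategory $\Mod_R^{(\cV \cup \cW) -\mathrm{tors}}$. Together these identify $L_{\cV}L_{\cW}$ with $L_{\cV \cup \cW}$; the symmetric equivalence is analogous.

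The main obstacle I anticipate is ensuring that the comparison map $\alpha_M$ in part (1) is identified precisely enough that the $\Gamma_{\fp}$-check reduces cleanly to Lemma \ref{lem:supportloccohom}, rather than to some additional homotopy-coherence data; once that compatibility is nailed down, the argument is a routine combination of the smashing properties established in Theorem \ref{thm:localduality} with the local-to-global principle.
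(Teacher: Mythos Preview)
Your argument for part (1) is essentially the paper's own: construct a comparison map $\Gamma_{\cV\cap\cW}\to\Gamma_{\cV}\Gamma_{\cW}$ (the paper obtains it from the inclusion $\cV\cap\cW\subseteq\cW$ together with $\Gamma_{\cV}\Gamma_{\cV\cap\cW}\simeq\Gamma_{\cV\cap\cW}$, you from factoring the counit---these are the same map), then verify it is an equivalence by applying $\Gamma_{\fp}$ for each $\fp$ via Lemma~\ref{lem:supportloccohom} and invoking the local-to-global principle~\eqref{eq:localtoglobal}.

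For part (2) you take a genuinely different route. The paper indicates that the proof is ``similar'' to (1), meaning one constructs a comparison map and checks it after applying $\Gamma_{\fp}$, using the dual of Lemma~\ref{lem:supportloccohom} (namely $\Gamma_{\fp}L_{\cV}\simeq\Gamma_{\fp}$ if $\fp\notin\cV$ and $0$ otherwise) together with the local-to-global principle. You instead argue directly from smashing: since $L_{\cV}L_{\cW}(-)\simeq L_{\cV}R\otimes L_{\cW}R\otimes(-)$ is itself a smashing localization, it suffices to identify its local objects with those of $L_{\cV\cup\cW}$, which you do. This is a more elementary argument---it avoids the local-to-global principle entirely and works in any setting where the relevant localizations are smashing. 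One small point worth making explicit is that $L_{\cV}L_{\cW}$ is indeed a localization functor (immediate from smashing), so that matching up local objects really does pin it down; once that is said, your two checks give both containments and the argument is complete.
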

\begin{proof}
We will only prove (1), and leave the similar proof of (2) for the reader. Since the claim is symmetric in $\cV$ and $\cW$, it suffices to verify that $\Gamma_{\cV}\Gamma_{\cW} \simeq \Gamma_{\cV \cap \cW}$. As $\cV \cap \cW \subseteq \cW$, there exists a natural map $\Gamma_{\cV \cap \cW} \to \Gamma_{\cW}$, which induces a natural transformation $\Gamma_{\cV \cap \cW} \simeq \Gamma_{\cV}\Gamma_{\cV \cap \cW} \to \Gamma_{\cV}\Gamma_{\cW}$ by \cite[Lem.~3.4]{benson_local_cohom_2008}. By Lemma\nobreakspace \ref {lem:supportloccohom}, we have equivalences
\begin{align*}
\Gamma_{\fp}\Gamma_{\cV}\Gamma_{\cW} & \simeq 
\begin{cases}
\Gamma_{\fp}\Gamma_{\cW} & \text{if } \fp \in \cV \\
0 & \text{otherwise}
\end{cases}\\
& \simeq
\begin{cases}
\Gamma_{\fp} & \text{if } \fp \in \cV \cap \cW \\
0 & \text{otherwise},
\end{cases}
\end{align*}
for any $\fp \in \Spec(\pi_*R)$. The same calculation works for $\Gamma_{\cV \cap \cW}$, so the result follows from the local-to-global principle \eqref{eq:localtoglobal}.
\end{proof}
\begin{prop}\label{prop:localdecomp}\sloppy
  Let $\fp \ne \fp'$ be prime ideals in $\Spec(\pi_*R)$, then there is a natural equivalence of functors $$\xymatrix{L_{\cV(\fp)}L_{\cZ(\fp)}\Gamma_{\cV(\fp')} \ar[r]^-{\simeq}& \Gamma_{\fp'}L_{\cZ(\fp)}}$$
    if and only if there is no prime ideal $\fq$ such that $\fp' \subsetneq \fq$ and $\fq \subsetneq \fp$. Furthermore, there is an equivalence
    \[
      \Gamma_{\fp'}L_{\cZ(\fp)} \simeq
    \begin{cases}
    \Gamma_{\fp'} & \text{ if } \fp' \subseteq \fp \\
    0, & \text{ otherwise.}
    \end{cases}
    \]
\end{prop}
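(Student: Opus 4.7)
The plan is to verify the concluding identification first, since it feeds directly into the main iff. Unfolding $\Gamma_{\fp'} = \Gamma_{\cV(\fp')}L_{\cZ(\fp')}$ and invoking Lemma~\ref{lem:comparisonloccohom}(2), I rewrite
\[
\Gamma_{\fp'}L_{\cZ(\fp)} \simeq \Gamma_{\cV(\fp')}L_{\cZ(\fp') \cup \cZ(\fp)}.
\]
If $\fp' \subseteq \fp$ then $\cZ(\fp) \subseteq \cZ(\fp')$, the union collapses to $\cZ(\fp')$, and we recover $\Gamma_{\fp'}$. If instead $\fp' \not\subseteq \fp$, a direct check yields $\cV(\fp') \subseteq \cZ(\fp') \cup \cZ(\fp)$: every prime $\fq \supsetneq \fp'$ lies in $\cZ(\fp')$, while $\fp'$ itself lies in $\cZ(\fp)$ by hypothesis. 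Combined with the general cancellation $\Gamma_{\cV}L_{\cU} \simeq 0$ whenever $\cV \subseteq \cU$---proved by applying $\Gamma_{\cV}$ to the cofiber $\Gamma_{\cU} \to \Id \to L_{\cU}$ and invoking Lemma~\ref{lem:comparisonloccohom}(1) to see $\Gamma_{\cV}\Gamma_{\cU} \xrightarrow{\sim} \Gamma_{\cV}$---this forces $\Gamma_{\fp'}L_{\cZ(\fp)} \simeq 0$.

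For the main iff, the strategy is to produce a natural transformation $\alpha \colon L_{\cV(\fp)}L_{\cZ(\fp)}\Gamma_{\cV(\fp')} \to \Gamma_{\fp'}L_{\cZ(\fp)}$ and then test its invertibility using the local-to-global principle~\eqref{eq:localtoglobal}: $\alpha$ is an equivalence iff $\Gamma_{\fq}\alpha$ is an equivalence for every prime $\fq$. By Lemma~\ref{lem:comparisonloccohom}(2), $L_{\cV(\fp)}L_{\cZ(\fp)} \simeq L_{\cV(\fp) \cup \cZ(\fp)}$, and $\fq \notin \cV(\fp) \cup \cZ(\fp)$ is equivalent to $\fq \subsetneq \fp$; combining the analogous cofiber-sequence fact $\Gamma_{\fq}L_{\cU} \simeq \Gamma_{\fq}$ for $\fq \notin \cU$ with Lemma~\ref{lem:supportloccohom}, one obtains that $\Gamma_{\fq}$ of the LHS is equivalent to $\Gamma_{\fq}$ exactly when $\fp' \subseteq \fq \subsetneq \fp$, and vanishes otherwise. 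For the RHS, $\Gamma_{\fp'}$ lands in modules of support contained in $\{\fp'\}$, so $\Gamma_{\fq}$ of the RHS vanishes unless $\fq = \fp'$, in which case the furthermore identifies it with $\Gamma_{\fp'}$ when $\fp' \subseteq \fp$ and with $0$ otherwise. These two descriptions agree for all $\fq$ precisely when no prime satisfies $\fp' \subsetneq \fq \subsetneq \fp$; conversely, any such intermediate prime gives $\Gamma_{\fq}R \neq 0$ on the LHS but zero on the RHS, obstructing any natural equivalence.

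The main obstacle is constructing $\alpha$ cleanly. One concrete route is to observe that the composite $\Gamma_{\cV(\fp')}X \to X \to L_{\cZ(\fp') \cup \cZ(\fp)}X$ lifts through the colocalization $\Gamma_{\cV(\fp')}L_{\cZ(\fp') \cup \cZ(\fp)}X = \Gamma_{\fp'}L_{\cZ(\fp)}X$ via the adjunction in Theorem~\ref{thm:localduality}(4); the resulting map $\Gamma_{\cV(\fp')}X \to \Gamma_{\fp'}L_{\cZ(\fp)}X$ then extends uniquely along the $L_{\cV(\fp) \cup \cZ(\fp)}$-localization, because the target has support at $\{\fp'\}$ and is therefore $(\cV(\fp) \cup \cZ(\fp))$-local when $\fp' \subsetneq \fp$ (and is zero otherwise). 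With $\alpha$ so defined, the support calculation sketched above delivers both directions of the bi-implication.
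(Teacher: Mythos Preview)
Your argument is correct and complete, but it is organised rather differently from the paper's. The paper builds the comparison as a zig-zag of unit maps
\[
L_{\cV(\fp)}L_{\cZ(\fp)}\Gamma_{\cV(\fp')} \xrightarrow{\ \eta\ } L_{\cZ(\fp')}L_{\cV(\fp)}L_{\cZ(\fp)}\Gamma_{\cV(\fp')} \xleftarrow{\ \tilde\eta\ } L_{\cZ(\fp')}L_{\cZ(\fp)}\Gamma_{\cV(\fp')} \simeq \Gamma_{\fp'}L_{\cZ(\fp)},
\]
shows $\tilde\eta$ is always an equivalence for $\fp\neq\fp'$ by identifying its fiber with $\Gamma_{\fp'}\Gamma_{\fp}\simeq 0$, and then computes the support of the fiber of $\eta$ (namely $\Gamma_{\cZ(\fp')\cap\cV(\fp')}L_{\cV(\fp)\cup\cZ(\fp)}R$) directly via \cite[Thm.~5.6]{benson_local_cohom_2008} as the set of strictly intermediate primes. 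The ``furthermore'' is handled last, as a one-line remark. By contrast, you prove the ``furthermore'' first via the elementary cancellation $\Gamma_{\cV}L_{\cU}\simeq 0$ for $\cV\subseteq\cU$, then manufacture $\alpha$ by extending along the $L_{\cV(\fp)\cup\cZ(\fp)}$-localization (using that the target is supported at $\{\fp'\}$), and finally test $\alpha$ primewise via the local-to-global principle. Your route is a touch more self-contained, since it avoids invoking the external support formula and the vanishing of $\Gamma_{\fp}\Gamma_{\fp'}$; the paper's zig-zag, on the other hand, produces the comparison map uniformly without the case split on $\fp'\subsetneq\fp$ that your extension argument requires. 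One small point: you should make explicit that in the sole nonvanishing case $\fq=\fp'$ the map $\Gamma_{\fp'}\alpha$ really is the canonical equivalence---this follows immediately from your construction, since both the localization unit and the $\Gamma_{\cV(\fp')}$-applied unit become equivalences after $\Gamma_{\fp'}$, but it is worth saying.
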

\begin{proof}
  For any $\fp, \fp' \in \Spec(\pi_*R)$, the unit maps provide natural transformations
  \[
\xymatrix{ L_{\cV(\fp)}L_{\cZ(\fp)}\Gamma_{\cV(\fp')} \ar[r]^-{\eta} &L_{\cZ(\fp')}L_{\cV(\fp)}L_{\cZ(\fp)}\Gamma_{\cV(\fp')} & \ar[l]_-{\tilde \eta} L_{\cZ(p')}L_{\cZ(\fp)}\Gamma_{\cV(\fp')} \simeq \Gamma_{\fp'}L_{\cZ(\fp)}.} 
  \]
  We first claim that $\tilde \eta$ is an equivalence whenever $\fp \ne \fp'$. It is enough to show that the fiber of $\tilde \eta$ is zero. But this fiber is equivalent to $L_{\cZ(\fp')}\Gamma_{\cV(\fp)}L_{\cZ(\fp)}\Gamma_{\cV(\fp')} \simeq \Gamma_{\fp'}\Gamma_{\fp}$. By \cite[Lem.~4.26]{bhv2} this is zero unless $\fp = \fp'$. 

  Now consider the fiber of $\eta$, which is equivalent to $\Gamma_{\cZ(\fp')}L_{\cV(\fp)}L_{\cZ(\fp)}\Gamma_{\cV(\fp')}$. To show this is zero it is enough to show that the support is empty when evaluated on $R$ \cite[Thm.~2]{benson_local_cohom_2008}. Using Lemma\nobreakspace \ref {lem:comparisonloccohom} the fiber (evaluated at $R$) is equivalent to $\Gamma_{\cZ(\fp') \cap \cV(\fp')} L_{\cZ(\fp) \cup \cV(\fp)}R$. Using \cite[Thm.~5.6]{benson_local_cohom_2008} this has support $\cZ(\fp') \cap \cV(\fp')\cap (\cZ(\fp) \cup \cV(\fp))^{\complement}$. Now $(\cV(\fp) \cup \cZ(\fp))^{\complement} = \{\fq \mid \fq \subsetneq \fp\}$ and $\cZ(\fp') \cap \cV(\fp') = \{ \fq \mid \fp' \subsetneq \fq \}$, and so the support is empty precisely when there is no $\fq' \in \Spec(\pi_*R)$ such that $\fp' \subsetneq \fq$ and $\fq \subsetneq \fp$, as required. 

The second statement follows from the observation that $\Gamma_{\fp'}L_{\cZ(\fp)} \simeq \Gamma_{\fp'}$ if and only $\fp' \in \cZ(\fp)^{\complement}$, which in turn is equivalent to $\fp' \subseteq \fp$; if this is not the case, then $\Gamma_{\fp'}L_{\cZ(\fp)} \simeq 0$.
\end{proof}
This proposition leads naturally to the following definition. 
\begin{defn}\label{defn:adjacent}
A pair $(\fp' \subset \fp)$ of primes in $\Spec(\pi_*R)$ is called adjacent if $\fp' \ne \fp$ and there exists no prime ideal $\fq$ such that $\fp' \subsetneq \fq \subsetneq \fp$.
\end{defn}
The following is an immediate consequence of the definition and \protect \MakeUppercase {P}roposition \ref{prop:localdecomp}. 
\begin{cor}\label{cor:adjacentdecomp}
  For any adjacent pair $(\fp' \subset \fp)$ of primes, there is an equivalence of functors 
\[
\xymatrix{L_{\cV(\fp)}L_{\cZ(\fp)}\Gamma_{\cV(\fp')} \ar[r]^-{\simeq}& \Gamma_{\fp'}.}
\]
\end{cor}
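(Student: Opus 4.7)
The plan is to apply \protect \MakeUppercase {P}roposition \ref{prop:localdecomp} directly, chaining its two conclusions. The adjacency hypothesis is precisely what unlocks each of the two equivalences needed.

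First, since $(\fp' \subset \fp)$ is adjacent in the sense of \protect \MakeUppercase {D}efinition\nobreakspace \ref {defn:adjacent}, we have $\fp' \ne \fp$ and there is no prime ideal $\fq$ with $\fp' \subsetneq \fq \subsetneq \fp$. The first part of \protect \MakeUppercase {P}roposition \ref{prop:localdecomp} then yields a natural equivalence
\[
L_{\cV(\fp)}L_{\cZ(\fp)}\Gamma_{\cV(\fp')} \xrightarrow{\simeq} \Gamma_{\fp'}L_{\cZ(\fp)}.
\]

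Second, since $\fp' \subsetneq \fp$ entails in particular $\fp' \subseteq \fp$, the case analysis in the second part of \protect \MakeUppercase {P}roposition \ref{prop:localdecomp} gives a natural equivalence
\[
\Gamma_{\fp'}L_{\cZ(\fp)} \xrightarrow{\simeq} \Gamma_{\fp'}.
\]

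Composing these two equivalences produces the desired natural equivalence $L_{\cV(\fp)}L_{\cZ(\fp)}\Gamma_{\cV(\fp')} \xrightarrow{\simeq} \Gamma_{\fp'}$. There is no real obstacle here: the work has already been done in \protect \MakeUppercase {P}roposition \ref{prop:localdecomp}, and the corollary simply repackages the statement under the adjacency hypothesis, which is the exact combinatorial condition on primes needed to activate both clauses of the proposition simultaneously.
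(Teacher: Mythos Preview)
Your proof is correct and matches the paper's own argument, which simply notes that the corollary is an immediate consequence of \protect \MakeUppercase {D}efinition\nobreakspace \ref{defn:adjacent} and \protect \MakeUppercase {P}roposition\nobreakspace \ref{prop:localdecomp}. You have spelled out exactly how the two clauses of that proposition are triggered by the adjacency hypothesis.
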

\subsection{The algebraic splitting hypothesis}

In this subsection, we formulate an analogue of Hopkins' chromatic splitting conjecture in chromatic homotopy theory, as described in \cite{hovey_csc}, for arbitrary structured ring spectra $R$.  We will start by constructing an appropriate version of the chromatic fracture square. To this end, first observe that $\Lambda_{\fp}L_{\cZ(\fp)} = \Lambda_{\cV(\fp)}\Delta_{\cZ(\fp)}L_{\cZ(\fp)} \simeq \Lambda_{\cV(\fp)}L_{\cZ(\fp)}$, so  
\begin{equation}\label{eq:localformula}
\Lambda_{\fp}M \simeq \Lambda_{\cV(\fp)}M
\end{equation}
for all $M \in \Mod_{R_{\fp}}$.

\begin{lem}\label{lemp:pcfs}
For any $ M$ in $\Mod_{R_{\frak p}}$ there is a homotopy pullback square 
\[
\xymatrix{
M \ar[d] \ar[r] & \Lambda_{\frak p}M \ar[d] \\
L_{\cV(\frak p)}M \ar[r]_-{\iota_{\fp}(M)} & L_{\cV(\frak p)}\Lambda_{\frak p}M,}
\]
whose horizontal fibers are equivalent to $\Delta_{\fp}M = \Lambda_{\cV(\fp)}\Delta_{\cZ(\fp)}M$. 
\end{lem}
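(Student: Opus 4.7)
My plan is to present the square as an instance of the chromatic fracture square of Theorem~\ref{thm:localduality}(5) applied to $\cV = \cV(\fp)$, translated via the identification of $\Lambda_\fp$ with $\Lambda_{\cV(\fp)}$ on $\fp$-local modules that is recorded in~\eqref{eq:localformula}.

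First, applying Theorem~\ref{thm:localduality}(5) with $\cV = \cV(\fp)$ to $M$ immediately produces a homotopy pullback square
\[
\xymatrix{M \ar[r] \ar[d] & \Lambda_{\cV(\fp)}M \ar[d] \\
L_{\cV(\fp)}M \ar[r] & L_{\cV(\fp)}\Lambda_{\cV(\fp)}M.}
\]
Second, since $M \in \Mod_{R_\fp}$, equation~\eqref{eq:localformula} supplies a natural equivalence $\Lambda_{\cV(\fp)}M \simeq \Lambda_\fp M$, and applying the exact functor $L_{\cV(\fp)}$ extends this to an equivalence of the lower-right corners. Substituting these identifications turns the pullback above into the square claimed in the lemma, with the bottom arrow becoming $\iota_\fp(M)$.

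Third, to compute the horizontal fibres, the cofibre sequence $\Delta_{\cV(\fp)}M \to M \to \Lambda_{\cV(\fp)}M$ from Theorem~\ref{thm:localduality}(1) identifies the top horizontal fibre as $\Delta_{\cV(\fp)}M$, and the fact that horizontal fibres in a homotopy pullback agree yields the same identification along the bottom. To rewrite this fibre in the form $\Delta_\fp M = \Lambda_{\cV(\fp)}\Delta_{\cZ(\fp)}M$, I would use the $\fp$-locality of $M$ together with the equivalence $\Delta_{\cZ(\fp)}L_{\cZ(\fp)} \simeq L_{\cZ(\fp)}$ that underlies the derivation of~\eqref{eq:localformula}, which allows one to replace $\Delta_{\cZ(\fp)}M$ by $M$ (or, in the other direction, to insert $\Delta_{\cZ(\fp)}$ for free).

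I expect the main obstacle to be bookkeeping rather than anything conceptual: once the square is recognised as the $\cV(\fp)$-fracture square, the remaining work lies in tracking naturality carefully enough to ensure that the specific comparison map $\iota_\fp(M)$ featured in~\eqref{eq:chromsquare} coincides on the nose with the fracture-square map after the localisation-completion identifications, and that the two natural descriptions of the fibre $\Delta_\fp M$ (as $\Delta_{\cV(\fp)}M$ and as $\Lambda_{\cV(\fp)}\Delta_{\cZ(\fp)}M$) are related by the canonical equivalence rather than merely abstractly isomorphic.
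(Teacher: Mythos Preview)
Your approach is correct and matches the paper's one-line proof, which simply invokes the fracture square of Theorem~\ref{thm:localduality}(5) for $\cV(\fp)$ together with~\eqref{eq:localformula}. The only sticking point in your fibre identification is not your fault: the displayed formula $\Delta_{\fp}M = \Lambda_{\cV(\fp)}\Delta_{\cZ(\fp)}M$ in the lemma is evidently a typo (as written it equals $\Lambda_{\fp}M$ by definition, not the fibre), and the intended expression is $\Delta_{\cV(\fp)}\Delta_{\cZ(\fp)}M$, which your ``insert $\Delta_{\cZ(\fp)}$ for free'' manoeuvre establishes cleanly.
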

\begin{proof}
This follows from the chromatic pullback square of \protect \MakeUppercase {T}heorem\nobreakspace \ref {thm:localduality}(5) for the local duality context $(\Mod_{R},\cV(\fp))$  combined with \eqref{eq:localformula}.
\end{proof}

The fracture square can be lifted to a categorical decomposition of $\Mod_{R_{\frak p}}$. This makes precise the sense in which the natural transformation $L_{\cV(\frak p)} \to L_{\cV(\frak p)}\Lambda_{\frak p}$ controls the categorical gluing process. 

\begin{prop}
There is a pullback square of stable $\infty$-categories
\[
\xymatrix{\Mod_{R_{\frak p}} \ar[r] \ar[d] &  \Lambda_{\fp}\Mod_{R_{\fp}} \ar[d]^-{L_{\cV(\fp)}} \\
\left (\Mod_{R_{\frak p}}^{\cV(\frak p)-\mathrm{loc}}\right )^{\Delta^1} \ar[r]_-{\pi_1} & \Mod_{R_{\frak p}}^{\cV(\frak p)-\mathrm{loc}},}
\]
in which $\pi_1$ denotes the evaluation at $1 \in [0,1] = \Delta^1$. The left vertical functor sends a module $M \in \Mod_{R_{\frak p}}$ to the bottom map $L_{\cV(\frak p)}M \to L_{\cV(\frak p)}\Lambda_{\frak p}M$ in the fracture square of Lemma\nobreakspace \ref {lemp:pcfs}. 
\end{prop}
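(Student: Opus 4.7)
The plan is to recognize the claimed pullback as a standard recollement/gluing decomposition of $\Mod_{R_\fp}$ attached to the smashing cofiber sequence $\Gamma_{\cV(\fp)} \to \Id \to L_{\cV(\fp)}$, with the pointwise input supplied by the fracture square of Lemma~\ref{lemp:pcfs}.

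First I would assemble the comparison functor
\[
F\colon \Mod_{R_\fp} \longrightarrow \Lambda_\fp\Mod_{R_\fp} \times_{\Mod_{R_\fp}^{\cV(\fp)-\mathrm{loc}}} \bigl(\Mod_{R_\fp}^{\cV(\fp)-\mathrm{loc}}\bigr)^{\Delta^1}
\]
given on objects by $M \mapsto \bigl(\Lambda_\fp M,\; \iota_\fp(M)\colon L_{\cV(\fp)} M \to L_{\cV(\fp)}\Lambda_\fp M\bigr)$; this is well defined and functorial by the universal property of the pullback together with the naturality of $\iota_\fp$. Next, construct a candidate inverse $G$ on the pullback by sending a datum $\bigl(N,\; f\colon X \to L_{\cV(\fp)} N\bigr)$ to the fiber product $X \times_{L_{\cV(\fp)} N} N$ formed in $\Mod_{R_\fp}$, where the right leg is the canonical localization unit $N \to L_{\cV(\fp)} N$. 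The equivalence $GF \simeq \Id$ is then immediate from Lemma~\ref{lemp:pcfs}.

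To verify $FG \simeq \Id$, I would use that $L_{\cV(\fp)}$ and $\Lambda_\fp$ are exact on $\Mod_{R_\fp}$ and hence preserve finite limits, together with the vanishing $\Lambda_\fp L_{\cV(\fp)} \simeq 0$ on $\fp$-local modules. The latter follows by applying $\Lambda_{\cV(\fp)}$ to the cofiber sequence $\Gamma_{\cV(\fp)} \to \Id \to L_{\cV(\fp)}$ and invoking $\Lambda_{\cV(\fp)}\Gamma_{\cV(\fp)} \simeq \Lambda_{\cV(\fp)}$ from Theorem~\ref{thm:localduality}(3), combined with \eqref{eq:localformula}. Substituting into the pullback defining $G(N,f)$ then yields $L_{\cV(\fp)} G(N,f) \simeq X$ (as $X$ and $L_{\cV(\fp)} N$ are already $\cV(\fp)$-local) and $\Lambda_\fp G(N,f) \simeq \Lambda_\fp N \simeq N$, with the induced comparison map identified with $f$ by a short diagram chase.

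The step I expect to require the most care is promoting this pair of pointwise quasi-inverses to a genuine equivalence of $\infty$-categories, as opposed to an equivalence of underlying homotopy categories. The cleanest route is to check that $F$ is fully faithful: applying $\mathrm{Map}_{\Mod_{R_\fp}}(M,-)$ to the fracture square for $M'$ in Lemma~\ref{lemp:pcfs} and then using the universal properties of $L_{\cV(\fp)}$ and $\Lambda_\fp$ as localizations identifies $\mathrm{Map}_{\Mod_{R_\fp}}(M,M')$ with the corresponding pullback of mapping spaces in the target $\infty$-category; essential surjectivity is then witnessed by the explicit quasi-inverse $G$.
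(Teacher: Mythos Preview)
Your argument is correct: the construction of $F$ and $G$, the vanishing $\Lambda_{\cV(\fp)}L_{\cV(\fp)}\simeq 0$, and the full-faithfulness computation via the fracture square all go through as you describe. The paper, however, does not carry out this argument directly; it simply invokes \cite[Cor.~2.28]{bhv}, a general categorical decomposition theorem valid for any local duality context $(\cC,\cS)$, specializes it to $(\Mod_R,\cV(\fp))$, restricts to the $\fp$-local subcategory, and then uses \eqref{eq:localformula} to rewrite the corner $\Mod_{R_\fp}^{\cV(\fp)-\mathrm{comp}}$ as $\Lambda_{\fp}\Mod_{R_{\fp}}$. In effect you have reproved the relevant instance of that corollary by hand: your route is self-contained and exposes the mechanism (in particular the role of the fracture square in full faithfulness and of $\Lambda_{\cV(\fp)}L_{\cV(\fp)}\simeq 0$ in essential surjectivity), while the paper's route is a one-line black-box citation that situates the result within the general local duality framework.
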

\begin{proof}
The pullback square of categories is a special case of \cite[Cor.~2.28]{bhv} applied to the local duality context $(\Mod_{R},\cV(\fp))$ and restricted to $ \Mod_{R_{\frak p}}$. To identify the right upper corner, we note that \eqref{eq:localformula} gives a canonical equivalence
\[
\xymatrix{\Mod_{R_{\frak p}}^{\cV(\frak p)-\mathrm{comp}} \ar[r]^-{\sim} &  \Lambda_{\fp}\Mod_{R_{\fp}}}
\]
of symmetric monoidal stable $\infty$-categories. 
\end{proof}

\begin{defn}\label{defn:type}
A compact $R$-module $M$ is said to be of type $\fp$ if $M \in \Thick(R\mm\fp)$. Equivalently, a compact module $M$ is of type $\fp$ if and only if its support, i.e. the set $\{ \fp \in \Spec (\pi_*R) \mid \Gamma_{\fp} M \ne 0 \}$, is contained in $\cV(\fp)$.
\end{defn}

The next lemma is an analogue of Ravenel's result \cite[Thm.~2.11]{ravenel_localization} saying that (among other things), for a finite spectrum $X \in \Sp$, $K(n)_*(X) =0$ implies $K(m)_*(X)=0$ for all $0 \le m\le n$. However, since the topology of $\Spec(\pi_*R)$ is more complicated, this lemma does not provide an alternative characterization of type. 

\begin{lem}
A compact module $M \in \Mod_R^{\omega}$ satisfies $L_{\cZ(\fq)}M = 0$ if and only if $\Gamma_{\fq}M \simeq 0$. In particular, both conditions holds if $M$ is of type $\fp$ and $\fq \notin \cV(\fp)$.
\end{lem}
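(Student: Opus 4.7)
The forward implication should be immediate from the factorization $\Gamma_{\fq} = \Gamma_{\cV(\fq)} L_{\cZ(\fq)}$, so I would devote essentially all of the work to the converse. Assuming $\Gamma_{\fq} M = 0$, set $N = L_{\cZ(\fq)} M$. The hypothesis then says $\Gamma_{\cV(\fq)} N = 0$, so $N$ is simultaneously $\cZ(\fq)$- and $\cV(\fq)$-local; by Lemma \ref{lem:comparisonloccohom}(2) this gives $N \simeq L_{\cV(\fq) \cup \cZ(\fq)} N$. My strategy is then to invoke the local-to-global principle \eqref{eq:localtoglobal} and check $\Gamma_{\fr} N = 0$ prime-by-prime. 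Applying $\Gamma_{\fr}$ to the localization triangle and using Lemma \ref{lem:supportloccohom} handles $\fr \in \cV(\fq) \cup \cZ(\fq)$ for free, and the only surviving cases are $\fr \subsetneq \fq$, where Proposition \ref{prop:localdecomp} (second clause) identifies $\Gamma_{\fr} N \simeq \Gamma_{\fr} L_{\cZ(\fq)} M \simeq \Gamma_{\fr} M$.

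The heart of the proof is therefore to bootstrap from $\Gamma_{\fq} M = 0$ to $\Gamma_{\fr} M = 0$ for every $\fr \subsetneq \fq$, and this is where compactness of $M$ together with the Noetherian standing hypothesis must enter—without them there is no reason for the support to propagate along generalizations. I expect the cleanest way forward is to show that $\supp_R M$ is \emph{specialization closed} for compact $M$ over a Noetherian ring spectrum. Since $\pi_*M$ is then finitely generated over $\pi_*R$, the classical support $V(\Ann \pi_*M)$ is Zariski closed; using that $L_{\cZ(\fq)}$ computes algebraic $\fq$-localization on homotopy groups together with the Koszul formula of Lemma \ref{lem:koszulformula}, one identifies $\supp_R M$ with $V(\Ann \pi_*M)$, and hence obtains the desired specialization-closure. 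Contrapositively, $\fq \notin \supp_R M$ excludes every $\fr \subseteq \fq$ from the support, giving $\Gamma_{\fr} M = 0$ for $\fr \subsetneq \fq$, and the local-to-global principle then forces $N = 0$ as required.

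The final assertion is a direct corollary once Definition \ref{defn:type} is in hand: a module of type $\fp$ has $\supp_R M \subseteq \cV(\fp)$, so $\fq \notin \cV(\fp)$ automatically gives $\fq \notin \supp_R M$ and hence $\Gamma_{\fq} M = 0$; the equivalence just proved then delivers $L_{\cZ(\fq)} M = 0$. The main obstacle throughout is really the specialization-closure of $\supp_R M$ for compact $M$—every other step is a formal manipulation of the local duality quadruple, but this one genuinely depends on the Noetherian finiteness hypothesis and is the point where the argument could break down in a more general setting.
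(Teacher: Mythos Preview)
Your approach differs substantially from the paper's, and there is a genuine gap precisely at the step you flag as the ``main obstacle''. The paper's proof is a two-line citation: by \cite[Thm.~6.1.8]{hps_axiomatic} the Bousfield classes $\langle \Gamma_{\fq}R\rangle$ and $\langle R_{\fq}\mm\fq\rangle$ agree, and by \cite[Prop.~6.1.7(b)]{hps_axiomatic} one has $R_{\fq}\mm\fq \otimes M \simeq 0 \Longleftrightarrow L_{\cZ(\fq)}M \simeq 0$ for compact $M$. Chaining these gives the equivalence immediately; no local-to-global bookkeeping is needed.

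Your reduction to the identity $\supp_R M = V(\Ann \pi_*M)$ is, however, circular. The containment that comes for free is $\supp_R M \subseteq V(\Ann \pi_*M)$: if $(\pi_*M)_{\fq}=0$ then $M_{\fq}=0$ and hence $\Gamma_{\fq}M=0$. But a subset of a specialization-closed set need not be specialization-closed, so this direction is useless for your purposes. What you actually need is the reverse inclusion $V(\Ann \pi_*M)\subseteq \supp_R M$, i.e.\ that $M_{\fq}\neq 0$ forces $\Gamma_{\fq}M\neq 0$; the contrapositive of this is exactly the nontrivial implication of the lemma you are trying to prove. Neither the fact that $L_{\cZ(\fq)}$ computes algebraic localization on homotopy groups nor the Koszul colimit formula of Lemma~\ref{lem:koszulformula} delivers this: what is missing is the derived Nakayama statement that $R_{\fq}\mm\fq \otimes M_{\fq}\simeq 0$ implies $M_{\fq}\simeq 0$ for compact $M_{\fq}$, and that is precisely \cite[Prop.~6.1.7(b)]{hps_axiomatic}. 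Once you grant it, the Bousfield-class equality finishes the argument in one line and your entire prime-by-prime analysis becomes superfluous.
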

\begin{proof}
By \cite[Thm.~6.1.8]{hps_axiomatic} there is an equality of Bousfield classes $\langle \Gamma_{\fq} R \rangle = \langle R_{\fq}\mm \fq \rangle$. Combining this with \cite[Prop.~6.1.7(b)]{hps_axiomatic} we then have, for all $M \in \Mod_R^\omega$,  
\[
\Gamma_{\fq}M \simeq 0 \Longleftrightarrow R_{\fq}\mm \fq \otimes M \simeq 0 \Longleftrightarrow L_{\cZ(\fq)}M \simeq 0.
\]
Finally, if $M$ is of type $\fp$, then $\Gamma_{\fq}M \simeq 0$ for all $\fq \notin \cV(\fp)$. 
\end{proof}

With the notation of Definition\nobreakspace \ref{defn:adjacent} at hand, we may now state the following

\begin{csh}\label{csh}
Suppose $R$ is an $\bE_{\infty}$-ring spectrum. If $(\fp' \subset \fp)$ is an adjacent pair of primes in $\Spec(\pi_*R)$, then $\iota_{\fp}(M)$ is split for any compact $M \in \Mod_{R_{\fp}}$ of type $\fp'$. 
\end{csh}

\begin{rem}
The algebraic chromatic splitting hypothesis given here is inspired by and analogous (albeit not equivalent) to Hopkins' chromatic splitting hypothesis in chromatic homotopy theory, see \cite{hovey_csc}. In its simplest form, the chromatic splitting conjecture asks whether the bottom horizontal map in the chromatic fracture square
\[
\xymatrix{L_nX \ar[r] \ar[d] & L_{K(n)}X \ar[d]\\
L_{n-1}X \ar[r] & L_{n-1}L_{K(n)}X} 
\]
is split, where $X$ is a finite spectrum of type $n-1$, and $L_n$ and $L_{K(n)}$ denote Bousfield localization at Morava $E$-theory $E_n$ and Morava $K$-theory $K(n)$, respectively. Due to computational verification, it is known to be true for $n=0,1,2$, but completely open otherwise. Moreover, there are several refinements of this conjecture, the strongest form of which has been recently disproven by Beaudry \cite{beaudrycsc}.

However, since the corresponding localization functors are constructed differently, the algebraic chromatic splitting hypothesis for $R=S^0$ or $R=L_nS^0$ is not equivalent to the chromatic splitting conjecture, but should instead be considered as an algebraic analogue. This analogy is clearest under the additional assumption that $\Mod_R$ is canonically stratified by $\pi_*R$ in the sense of Benson--Iyengar--Krause~\cite{benson_stratifying_2011}. 
\end{rem}

\begin{rem}\label{rem:localhomvariant}
There is an alternative construction of the local homology functor which is closer to the situation in chromatic homotopy theory, namely $\widetilde{\Lambda}_{\fp} = \Lambda_{\cV(\fp)}L_{\cZ(\fp)}$. As we have seen in \eqref{eq:localformula}, $\Lambda_{\fp}$ and $\widetilde{\Lambda}_{\fp}$ agree on $\fp$-local objects, but they differ in general. As will be clear from the proof of \protect \MakeUppercase {T}heorem\nobreakspace \ref {thm:chromaticsplitting} below, the analogue of the algebraic chromatic splitting hypothesis\nobreakspace \ref {csh} for $\widetilde{\Lambda}_{\fp}$ holds for $\Mod_{R}$, i.e., without imposing any locality conditions. In contrast, $\Lambda_{\fp}$ is right adjoint to $\Gamma_{\fp}$, while this is only true $\fp$-locally for $\widetilde{\Lambda}_{\fp}$. 
\end{rem}

\section{Chromatic splitting}

Throughout this subsection, assume that $R$ is a Noetherian $\bE_{\infty}$-ring spectrum and recall that all prime ideals $\fp \in \Spec(\pi_*R)$ are finitely generated. In fact, it suffices to assume that $R$ has less structure, but we will not need this extra generality here.

\subsection{Phantom maps and Brown--Comenetz duality}

In this subsection, we generalize Brown--Comenetz duality to $\Mod_R$ and its local analogues, and prove a version of Margolis' nonexistence result for phantom maps with target $\bI X$. Furthermore, we describe how Brown--Comenetz duality is related to local duality and (local) Spanier--Whitehead duality. 

\begin{defn}
A map $f \colon X \to Y$ in $\Mod_R$ is called phantom if for all $C \in \Mod_R^\omega$ and maps $g \colon C \to X$ the composite $g \circ f$ is null. 
\end{defn}

For any (discrete) graded commutative ring $A$, denote the full subcategory of graded injective $A$-modules by $\Inj_A \subseteq \Mod_A$. For any $X \in \Mod_R$ and any graded injective module $I \in \Inj_{\pi_*R}$, the exact functor
\[
\xymatrixcolsep{3pc}
\xymatrix{\Mod_R^{\op} \ar[r]^-{\pi_*(X \otimes - )} & \Mod_{\pi_*R}^{\op} \ar[r]^-{\Hom(-,I)} & \Mod_{\pi_*R}}
\]
is representable by an object $\mathbb{I}X$. Note that $\bI R \simeq \bI$ is the lift of $I$ as defined in \cite[Sec.~4.1]{bhv2}. The next lemma is the analogue of a result due to Margolis~\cite[Prop.~5.1.2]{margolis} for the stable homotopy category; his proof generalizes easily.  

\begin{lem}\label{lem:nophantomintobc}
For any $X \in \Mod_R$ and any lift $\bI$ of some injective $\pi_*R$-module $I$, there are no nontrivial phantom maps with target $\bI X$. 
\end{lem}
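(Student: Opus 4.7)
The plan is to mimic Margolis' original argument by reducing the phantom vanishing for maps into $\mathbb{I}X$ to the statement that a homomorphism out of a filtered colimit of $\pi_*R$-modules is determined by its restrictions. First, I would unpack the representability: by construction, for every $Y\in\Mod_R$, the set of maps $[Y,\mathbb{I}X]$ is naturally isomorphic to $\Hom_{\pi_*R}(\pi_*(X\otimes Y),I)$, and this isomorphism is natural in $Y$. Thus a map $f\colon Y\to \mathbb{I}X$ is zero if and only if the corresponding homomorphism $\widetilde f\colon\pi_*(X\otimes Y)\to I$ vanishes.

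Next, since $\Mod_R$ is compactly generated, one may write $Y\simeq \colim_{i\in J} C_i$ as a filtered colimit of compact $R$-modules with structure maps $g_i\colon C_i\to Y$. Because $X\otimes(-)$ preserves colimits and homotopy groups commute with filtered colimits in any compactly generated stable category, one obtains
\[
\pi_*(X\otimes Y)\simeq \colim_{i\in J}\pi_*(X\otimes C_i).
\]
Naturality of the representability isomorphism then identifies, for each $i$, the composite $f\circ g_i\colon C_i\to \mathbb{I}X$ with the restriction of $\widetilde f$ along the canonical map $\pi_*(X\otimes C_i)\to \pi_*(X\otimes Y)$.

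Now the phantom hypothesis says precisely that $f\circ g_i=0$ for every $i\in J$, so $\widetilde f$ vanishes when restricted to each $\pi_*(X\otimes C_i)$. Since $\pi_*(X\otimes Y)$ is the filtered colimit of these modules in $\Mod_{\pi_*R}$ and a $\pi_*R$-linear map out of a colimit is zero as soon as all its components are zero, we conclude $\widetilde f=0$ and hence $f=0$.

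I do not foresee a serious obstacle: injectivity of $I$ is used only to guarantee the representability of the functor $\Hom_{\pi_*R}(\pi_*(X\otimes -),I)$ (and hence the existence of $\mathbb{I}X$), which has already been invoked in the setup. The only point that deserves a sentence of care is the naturality clause ensuring that the phantom condition on $f$ translates into restrictions of $\widetilde f$ vanishing, and the commutation of $\pi_*(X\otimes-)$ with filtered colimits, both of which are standard in our compactly generated symmetric monoidal stable setting.
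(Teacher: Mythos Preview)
Your argument is correct and is essentially the same as the paper's: both write $Y$ as a filtered colimit of compacts, invoke the defining isomorphism $[Y,\mathbb{I}X]\cong\Hom_{\pi_*R}(\pi_*(X\otimes Y),I)$, and use that $\pi_*(X\otimes -)$ commutes with filtered colimits to conclude that the phantom condition forces the corresponding homomorphism to vanish. The paper packages this as the injectivity of the restriction map $\pi_*\Hom_R(Y,\mathbb{I}X)\to\lim_J\pi_*\Hom_R(Y_\alpha,\mathbb{I}X)$ via a commutative square, whereas you phrase it elementwise via $\widetilde f$, but the content is identical.
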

\begin{proof}
The argument is a straightforward adaptation of Margolis' proof, which we give here for the convenience of the reader. Suppose $f\colon Y \to \bI X$ is a phantom map, and write $Y$ as a filtered colimit $\colim_{J}Y_{\alpha}$ of compact objects $Y_{\alpha}$. Consider the following commutative diagram
\[
\xymatrix{\pi_*\Hom_R(Y,\bI X) \ar[r]^-{\sim} \ar[d] & \Hom_{\pi_*R}^{-*}(\pi_*(X\otimes Y), I) \ar[d]^-{\sim} \\
\lim_J\pi_*\Hom_R(Y_{\alpha},\bI X) \ar[r]_-{\sim} & \lim_J\Hom_{\pi_*R}^{-*}(\pi_*(X\otimes Y_{\alpha}), I).}
\]
The horizontal maps are isomorphisms by the universal property of $\bI X$, while the right vertical map is so because $\pi_*$ commutes with filtered colimits. Therefore, the left vertical map is an isomorphism as well, and the claim follows. 
\end{proof}

Let $I_{\fp}$ be the injective hull of the residue field $\pi_*(R_{\fp})/\fp$ considered as an $R_{\fp}$-module. We write $\bI_{\fp} \in \Mod_{R_{\fp}}$ for the corresponding Brown--Comenetz dual of $R_{\fp}$ and $\phi_M\colon M \to \bI_{\fp}^{2}M$ for the canonical map. The next result gives a sufficient condition for when this map is an equivalence.

\begin{prop}\label{prop:bcduality}
The map $\phi_{M}$ is an equivalence if $\pi_*(M)$ is finitely generated as a module over $\pi_*R_{\fp}/\fp$. In particular, this is the case if $M \in \Thick(R_{\fp}\mm\fp)$. More generally, $\phi_M$ factors through an equivalence $M \simeq \Gamma_{\fp}\bI_{\fp}^{2}M$ for all $M \in \Mod_{R_{\fp}}^{\fp-\tors}$ such that $\pi_*(M \otimes R_{\fp}\mm\fp)$ is finitely generated over $\pi_*R_{\fp}/\fp$.
\end{prop}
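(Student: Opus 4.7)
The plan is to reduce $\phi_M$ to a biduality map on homotopy groups via the representability defining $\bI(-)$, apply classical Matlis duality, and then extend to the general case through a Koszul colimit argument.

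First, I would unwind $\bI^2$ on homotopy groups. The defining representability of $\bI X$, evaluated at $Y=R$, yields $\pi_*\bI X \cong \Hom_{\pi_*R_{\fp}}(\pi_*X, I_{\fp})$. Iterating produces $\pi_*\bI^2 M \cong \Hom(\Hom(\pi_* M, I_{\fp}), I_{\fp})$, under which $\pi_*\phi_M$ is identified with the canonical biduality map. For the first assertion, observe that if $\pi_*M$ is finitely generated over $k = \pi_*R_{\fp}/\fp$, then it is annihilated by $\fp$ and hence has finite length over the Noetherian local ring $\pi_*R_{\fp}$. Classical Matlis duality then says the biduality map is an isomorphism, so $\pi_*\phi_M$ is an isomorphism and $\phi_M$ is an equivalence.

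The \textit{in particular} clause follows by a thick subcategory argument: the full subcategory $\cT \subseteq \Mod_{R_{\fp}}$ of objects on which $\phi$ is an equivalence is thick, since $\phi$ is a natural transformation between the exact functors $\Id$ and $\bI^2$. This subcategory contains $R_{\fp}\mm\fp$, which is a perfect $R_{\fp}$-module annihilated by $\fp$ and hence has $\pi_*$ finitely generated over $k$ by the Noetherian assumption; therefore $\Thick(R_{\fp}\mm\fp)\subseteq \cT$.

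For the general statement, let $M\in \Mod_{R_{\fp}}^{\fp-\tors}$ with $\pi_*(M\otimes R_{\fp}\mm\fp)$ finitely generated over $k$. Since $M$ is $\fp$-torsion and $\bI^2M$ is an $R_{\fp}$-module, $M\simeq \Gamma_{\cV(\fp)}M$ and $\Gamma_\fp \bI^2 M \simeq \Gamma_{\cV(\fp)}\bI^2 M$, so $\Gamma_{\cV(\fp)}\phi_M$ provides the required factorization $M \to \Gamma_\fp \bI^2 M$. Using the Koszul formula of Lemma~\ref{lem:koszulformula}, this map is the filtered colimit over $s$ of $\phi_M\otimes \mathrm{id}_{R_{\fp}\mm\fp^{(s)}}$. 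Because $R_{\fp}\mm\fp^{(s)}$ is compact and hence dualizable, there is a canonical equivalence $\bI^2(M\otimes R_{\fp}\mm\fp^{(s)})\simeq \bI^2 M\otimes R_{\fp}\mm\fp^{(s)}$ under which $\phi_M\otimes \mathrm{id}$ corresponds to $\phi_{M\otimes R_{\fp}\mm\fp^{(s)}}$. Applying the first assertion to the hypothesis on $\pi_*(M\otimes R_\fp\mm\fp)$, together with the same thick subcategory argument, now applied to the class of compact $C$ for which $\phi_{M\otimes C}$ is an equivalence, shows that $\phi_{M\otimes R_\fp\mm\fp^{(s)}}$ is an equivalence for every $s$; passing to the colimit in $s$ concludes the proof.

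The main technical obstacle I anticipate is verifying that the canonical equivalence $\bI^2(M\otimes C)\simeq \bI^2 M \otimes C$ for dualizable $C$ genuinely identifies $\phi_{M\otimes C}$ with $\phi_M\otimes \mathrm{id}_C$; once this naturality is in place, the rest of the argument is a formal assembly of Matlis duality and thick subcategory closure.
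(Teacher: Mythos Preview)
Your proposal is correct and follows essentially the same route as the paper: reduce $\pi_*\phi_M$ to the Matlis biduality map, run a thick subcategory argument for the Koszul generator, and in the general case exploit dualizability of $R_\fp\mm\fp$ to identify $\phi_M \otimes R_\fp\mm\fp$ with $\phi_{M\otimes R_\fp\mm\fp}$. The paper detects the torsion equivalence by tensoring once with $R_\fp\mm\fp$ rather than passing through the full Koszul colimit of Lemma~\ref{lem:koszulformula}, but this is a cosmetic variation on the same idea, and it supplies a reference (\cite[Thm.~10.2(d)]{hovey_morava_1999}) for precisely the compatibility you flag as the main obstacle.
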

\begin{proof}
Suppose $M \in \Mod_{R_{\fp}}$ satisfies the condition of the proposition. Matlis duality and the definition of $\bI_{\fp}$ then provide isomorphisms
\begin{align*}
\pi_*\bI_{\fp}^{2}M & \cong \Hom^{-*}_{\pi_*{R_{\fp}}}(\pi_*(\bI M),I_p) \\
& \cong \Hom_{\pi_*{R_{\fp}}}^{-*}(\Hom^{-*}_{\pi_*R_{\fp}}(\pi_*(M),I_p),I_p) \\
& \cong \pi_*M.
\end{align*}
It is easy to verify that the composite isomorphism is the one induced by $\phi_M$. Furthermore, a standard inductive argument on the number of generators of $\fp$ implies that $\pi_*R_{\fp}\mm\fp$ is finitely generated over $\pi_*(R_{\fp})/\fp$, hence $\phi_M$ is an equivalence for all $M \in \Thick(R_{\fp}\mm\fp)$.

Now assume $M \in \Mod_{R_{\fp}}^{\fp-\tors}$ with the property that $\pi_*(M \otimes R_{\fp}\mm\fp)$ is finitely generated over $\pi_*R_{\fp}/\fp$. Since $M \simeq \Gamma_{\fp}M$, $\phi_M$ factors through a map 
\[
\xymatrix{\phi'_M\colon M \ar[r] & \Gamma_{\fp}\bI_{\fp}^2M,}
\]
so it suffices to prove that $\phi'_M \otimes R_{\fp}\mm\fp \simeq \phi_M \otimes R_{\fp}\mm\fp$ is an equivalence. Since $R_{\fp}\mm\fp$ is compact and thus dualizable in $\Mod_{R_{\fp}}$, the same argument as for \cite[Thm.~10.2(d)]{hovey_morava_1999} shows that $\phi_M \otimes R_{\fp}\mm\fp \simeq \phi_{M \otimes R_{\fp}\mm\fp}$. Therefore, we may assume that $\pi_*M$ is finitely generated over $\pi_*R_{\fp}/\fp$, reducing the claim to the previous case. 
\end{proof}

\begin{cor}\label{cor:nophantom}
For any prime $\fp$, there are no nontrivial phantom maps with target $R_{\fp}\mm\fp$. 
\end{cor}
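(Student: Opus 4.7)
The plan is to realise $R_{\fp}\mm\fp$ itself as a Brown--Comenetz dual and then invoke Lemma \ref{lem:nophantomintobc} directly. This should be essentially a one-line argument, since the nontrivial content has already been packaged into Proposition \ref{prop:bcduality} and Lemma \ref{lem:nophantomintobc}.

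First, I would observe that $R_{\fp}\mm\fp$ lies tautologically in $\Thick(R_{\fp}\mm\fp)$, so the second assertion of Proposition \ref{prop:bcduality} applies to $M = R_{\fp}\mm\fp$ and produces an equivalence
\[
\phi_{R_{\fp}\mm\fp}\colon R_{\fp}\mm\fp \xr{\sim} \bI_{\fp}^{2}(R_{\fp}\mm\fp) = \bI_{\fp}\bigl(\bI_{\fp}(R_{\fp}\mm\fp)\bigr).
\]
Setting $X = \bI_{\fp}(R_{\fp}\mm\fp)$, this displays $R_{\fp}\mm\fp \simeq \bI_{\fp} X$ as the Brown--Comenetz dual of $X$ with respect to the injective hull $I_{\fp}$ of $\pi_*R_{\fp}/\fp$, an injective $\pi_*R_{\fp}$-module.

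Second, I would apply Lemma \ref{lem:nophantomintobc} (in the ambient stable category $\Mod_{R_{\fp}}$, with $R$ replaced by $R_{\fp}$) to the object $\bI_{\fp} X \simeq R_{\fp}\mm\fp$: no nontrivial phantom map can have this as target, yielding the corollary.

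The only point requiring a brief remark is the compatibility of phantom maps across the change of category. Since $R_{\fp}\mm\fp$ is $\fp$-local, any map $Y \to R_{\fp}\mm\fp$ from $\Mod_R$ factors uniquely through $L_{\cZ(\fp)}Y \in \Mod_{R_{\fp}}$, so to rule out phantoms into $R_{\fp}\mm\fp$ in $\Mod_R$ it is enough to rule them out in $\Mod_{R_{\fp}}$; the only mild obstacle is verifying that this factorisation preserves the phantom property, which follows from the fact that compact objects in $\Mod_{R_{\fp}}$ are retracts of $\fp$-localizations of compact objects in $\Mod_R$ (indeed $R_{\fp}$ is the $\fp$-localization of $R$, and taking $-\mm\fp^{(s)}$ preserves compactness). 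With that bookkeeping the proof is complete.
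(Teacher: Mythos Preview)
Your argument is correct and is essentially identical to the paper's proof: both apply Proposition~\ref{prop:bcduality} to exhibit $R_{\fp}\mm\fp \simeq \bI_{\fp}^{2}(R_{\fp}\mm\fp)$ and then invoke Lemma~\ref{lem:nophantomintobc}. The paper dispatches this in a single sentence and does not include your final change-of-category remark; in fact that bookkeeping is unnecessary, since for Noetherian $\pi_*R$ the injective hull $I_{\fp}$ is already injective as a $\pi_*R$-module, so Lemma~\ref{lem:nophantomintobc} applies directly in $\Mod_R$.
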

\begin{proof}
By \protect \MakeUppercase {P}roposition\nobreakspace \ref {prop:bcduality}, there is an equivalence $R_{\fp}\mm\fp \simeq \mathbb{I}_{\fp}^2R_{\fp}\mm\fp$, so the claim follows from Lemma\nobreakspace \ref {lem:nophantomintobc}.
\end{proof}

We finally explain how Brown--Comenetz duality interacts with local duality, at least in the case that $R_{\fp}$ is absolute Gorenstein of shift $\nu$. Recall that this means that there is an (abstract) equivalence $\Gamma_{\fp}R \simeq \Sigma^{\nu}\bI_{\fp}$. For more details on Gorenstein ring spectra, see \cite{greenlees_hi} or \cite{bhv2}. For the following, we let $M^{\vee} = \Hom_{R_{\fp}}(M,R_{\fp})$ denote the dual of $M$ in $\Mod_{R_{\fp}}$. 

\begin{prop}\label{prop:bclocalduality}
Suppose $R_{\fp}$ is absolute Gorenstein of shift $\nu$. For any $M \in \Mod_{R_{\fp}}^{\omega}$, there is an equivalence $\Lambda_{\fp}M \simeq \Sigma^{\nu}\bI_{\fp}(\Gamma_{\fp}M^{\vee})$. In particular, all phantom maps with target $\Lambda_{\fp}M$ must be trivial.  
\end{prop}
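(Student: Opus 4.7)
The plan is to chain together a sequence of natural equivalences that identify $\Sigma^{\nu}\bI_{\fp}(\Gamma_{\fp}M^{\vee})$ with $\Lambda_{\fp}M$, and then deduce the phantom statement from Lemma \ref{lem:nophantomintobc}.

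First, I would unwind the construction of $\bI_{\fp}(-)$. Its universal property together with the tensor-hom adjunction yields a natural equivalence $\bI_{\fp}(X) \simeq \Hom_{R_{\fp}}(X,\bI_{\fp})$, so $\Sigma^{\nu}\bI_{\fp}(\Gamma_{\fp}M^{\vee}) \simeq \Hom_{R_{\fp}}(\Gamma_{\fp}M^{\vee},\Sigma^{\nu}\bI_{\fp})$. The Gorenstein hypothesis then replaces the target by $\Gamma_{\fp}R_{\fp}$. Invoking the adjunction $\Gamma_{\fp} \dashv \Lambda_{\fp}$ from Theorem \ref{thm:localduality}(4) rewrites this as $\Hom_{R_{\fp}}(M^{\vee},\Lambda_{\fp}\Gamma_{\fp}R_{\fp})$, and Theorem \ref{thm:localduality}(3) (applied via equation \eqref{eq:localformula} to reduce $\Lambda_{\fp}\Gamma_{\fp}$ on $\fp$-local objects to $\Lambda_{\cV(\fp)}\Gamma_{\cV(\fp)} \simeq \Lambda_{\cV(\fp)} \simeq \Lambda_{\fp}$) simplifies this to $\Hom_{R_{\fp}}(M^{\vee},\Lambda_{\fp}R_{\fp})$.

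Next, since $M$ is compact, hence dualizable in $\Mod_{R_{\fp}}$, we have $\Hom_{R_{\fp}}(M^{\vee},-) \simeq M \otimes -$. A short formal computation using the adjunction $\Gamma_{\fp} \dashv \Lambda_{\fp}$ and the smashing property of $\Gamma_{\fp}$ (Theorem \ref{thm:localduality}(2)) shows that $\Lambda_{\fp}$ commutes with $\Hom_{R_{\fp}}(C,-)$ for any $C$. Combining these, we obtain
\[
\Hom_{R_{\fp}}(M^{\vee},\Lambda_{\fp}R_{\fp}) \simeq \Lambda_{\fp}\Hom_{R_{\fp}}(M^{\vee},R_{\fp}) \simeq \Lambda_{\fp}M^{\vee\vee} \simeq \Lambda_{\fp}M,
\]
which completes the main equivalence.

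For the final assertion, observe that $\Sigma^{\nu}\bI_{\fp}(\Gamma_{\fp}M^{\vee})$ is of the form $\bI Y$ appearing in Lemma \ref{lem:nophantomintobc}, with $\bI := \Sigma^{\nu}\bI_{\fp}$ the lift of the (shifted) injective $\pi_*R_{\fp}$-module $\Sigma^{\nu}I_{\fp}$, and $Y := \Gamma_{\fp}M^{\vee}$. Applying Lemma \ref{lem:nophantomintobc} (or its evident analogue over $R_{\fp}$, whose proof is formally identical since it uses only the universal property and the commutation of $\pi_*$ with filtered colimits) yields the vanishing of phantom maps into $\Lambda_{\fp}M$. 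The main obstacle, if any, is confirming the identification $\bI_{\fp}(X) \simeq \Hom_{R_{\fp}}(X,\bI_{\fp})$ cleanly from the universal property; after that, everything reduces to routine bookkeeping with natural transformations.
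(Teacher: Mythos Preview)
Your argument is correct and is essentially the paper's proof repackaged: the paper runs the same chain of equivalences via Yoneda, computing $\pi_*\Hom_R(X,-)$ of both sides using the universal property of $\bI_{\fp}$, the smashing property of $\Gamma_{\fp}$, the adjunction $\Gamma_{\fp}\dashv\Lambda_{\fp}$, dualizability of $M$, the Gorenstein identification $\bI_{\fp}\simeq\Sigma^{-\nu}\Gamma_{\fp}R$, and $\Lambda_{\fp}\Gamma_{\fp}\simeq\Lambda_{\fp}$. The only cosmetic differences are that you first rewrite $\bI_{\fp}(-)\simeq\Hom_{R_{\fp}}(-,\bI_{\fp})$ and then manipulate internal Homs directly (invoking Gorenstein early rather than late), whereas the paper keeps the representability language throughout; the underlying steps and the appeal to Lemma~\ref{lem:nophantomintobc} for the phantom conclusion are identical.
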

\begin{proof}
The Gorenstein condition combined with \protect \MakeUppercase {T}heorem\nobreakspace \ref {thm:localduality} gives isomorphisms 
\begin{align*}
\pi_*\Hom_R(X,\bI_{\fp}\Gamma_{\fp}M^{\vee}) & \cong \Hom^{-*}_{\pi_*R}(\pi_*(X \otimes \Gamma_{\fp}M^{\vee}), I_{\fp}) \\
& \cong  \pi_*\Hom_R(X \otimes \Gamma_{\fp}M^{\vee}, \bI_{\fp}) \\
& \cong \pi_*\Hom_R(\Gamma_{\fp}X \otimes M^{\vee}, \bI_{\fp}) \\
& \cong  \pi_*\Hom_R(X, \Lambda_{\fp}(M \otimes \bI_{\fp})) \\
& \cong  \pi_*\Hom_R(X, \Lambda_{\fp}(M \otimes \Sigma^{-\nu}\Gamma_{\fp}R)) \\
& \cong  \pi_*\Hom_R(X, \Lambda_{\fp}\Sigma^{-\nu}M)
\end{align*}
for any $X \in \Mod_{R}$. Consequently, we get an equivalence $\Lambda_{\fp}M \simeq \Sigma^{\nu}\bI_{\fp}(\Gamma_{\fp}M^{\vee})$. The second part of the claim then follows from Lemma\nobreakspace \ref {lem:nophantomintobc}.
\end{proof}

\begin{cor}\label{cor:gorensteinphantom}
If $R_{\fp}$ is absolute Gorenstein of some shift $\nu$ and $M \in \Mod_R^{\omega}$, then all phantom maps with target $\widetilde{\Lambda}_{\fp}M$ are trivial.
\end{cor}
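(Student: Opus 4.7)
The plan is to reduce this corollary directly to Proposition \ref{prop:bclocalduality} by first showing that $\widetilde{\Lambda}_{\fp}M$ coincides with $\Lambda_{\fp}$ applied to a compact object of $\Mod_{R_{\fp}}$. Recall that by definition $\widetilde{\Lambda}_{\fp} = \Lambda_{\cV(\fp)}L_{\cZ(\fp)}$, and by Theorem \ref{thm:localduality}(2) the functor $L_{\cZ(\fp)}$ preserves compact objects, so $L_{\cZ(\fp)}M$ is a compact object of its essential image $\Mod_{R}^{\cZ(\fp)\text{-loc}} = \Mod_{R_{\fp}}$. In particular, $L_{\cZ(\fp)}M$ is $\fp$-local, so the identity \eqref{eq:localformula} gives
\[
\widetilde{\Lambda}_{\fp}M = \Lambda_{\cV(\fp)}L_{\cZ(\fp)}M \simeq \Lambda_{\fp}(L_{\cZ(\fp)}M).
\]

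With this identification in place, the hypothesis of Proposition \ref{prop:bclocalduality} is satisfied, since $R_{\fp}$ is absolute Gorenstein of shift $\nu$ and $L_{\cZ(\fp)}M \in \Mod_{R_{\fp}}^{\omega}$. Applying the proposition yields an equivalence
\[
\widetilde{\Lambda}_{\fp}M \simeq \Sigma^{\nu}\bI_{\fp}\bigl(\Gamma_{\fp}(L_{\cZ(\fp)}M)^{\vee}\bigr),
\]
exhibiting $\widetilde{\Lambda}_{\fp}M$ as the Brown--Comenetz dual (up to a shift) of the $R_{\fp}$-module $\Gamma_{\fp}(L_{\cZ(\fp)}M)^{\vee}$. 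The conclusion then follows immediately from Lemma \ref{lem:nophantomintobc}, which guarantees that there are no nontrivial phantom maps into any object of the form $\bI X$ for a lift $\bI$ of an injective $\pi_*R$-module.

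The only subtle point, and the one most likely to require care, is the verification that $L_{\cZ(\fp)}M$ is compact not merely as an object of $\Mod_R$ but as an object of $\Mod_{R_{\fp}}$, so that the compactness hypothesis of Proposition \ref{prop:bclocalduality} is genuinely available. This is however routine: smashing localization identifies $\Mod_{R_{\fp}}$ with the full subcategory $\Mod_R^{\cZ(\fp)\text{-loc}}$, and under this identification compactness is preserved. Once this is noted, the argument is essentially a formal string of rewrites, and there is no substantial further obstacle.
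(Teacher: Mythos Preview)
Your proof is correct and follows essentially the same approach as the paper: identify $\widetilde{\Lambda}_{\fp}M$ with $\Lambda_{\fp}(M_{\fp})$ via \eqref{eq:localformula}, note that $M_{\fp} \in \Mod_{R_{\fp}}^{\omega}$, and invoke Proposition~\ref{prop:bclocalduality}. The only difference is cosmetic: the paper applies the second sentence of Proposition~\ref{prop:bclocalduality} directly, whereas you unpack that sentence by writing out the equivalence with a Brown--Comenetz dual and then appealing to Lemma~\ref{lem:nophantomintobc} --- but this is exactly how the paper proves that second sentence in the first place.
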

\begin{proof}
As noted before, $\widetilde{\Lambda}_{\fp}M \simeq \widetilde{\Lambda}_{\fp}M_{\fp} \simeq {\Lambda}_{\fp}M_{\fp}$. Since $M_{\fp} \in \Mod_{R_{\fp}}^{\omega}$ is compact, Proposition\nobreakspace \ref{prop:bclocalduality} applies.
\end{proof}

\begin{rem}
We could also make the stronger global assumption that $R$ rather than $R_{\fp}$ is Gorenstein. In this case, there is an additional shift by $d$ appearing in the formula of \protect \MakeUppercase {P}roposition\nobreakspace \ref {prop:bclocalduality}, the dimension of the prime ideal $\fp$ in $R$.
\end{rem}

\begin{rem}
Note that, in general, module categories over Noetherian ring spectra contain many non-trivial phantom maps, due to the failure of the generating hypothesis in these situations. For example, the analogue of the generating hypothesis is known to fail for the derived category of a ring~\cite{hlpgenhypdermod} or the stable module category of a finite group~\cite{ccmgenhypstmod} except in very simple cases. Since we require a splitting in the category of $R$-module spectra, the results of this subsection are crucial for the proof of the algebraic chromatic splitting hypothesis for Noetherian ring spectra.  
\end{rem}

\subsection{Proof of the main theorem}

Fix a prime ideal $\fp \in \Spec(\pi_*R)$. We start with two auxiliary lemmas. 

\begin{lem}\label{lem:derivedcompletion}
For any prime $\fp$ and $M \in \Mod_{R_{\fp}}^{\omega}$, there is a natural equivalence $\pi_*\Lambda_{\fp}M \cong (\pi_*M)_{\fp}^{\wedge}$, where $(-)_{\fp}^{\wedge}$ denotes (underived) $\fp$-completion. 
\end{lem}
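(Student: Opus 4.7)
The plan is to reduce the statement to classical commutative algebra by combining the identification $\Lambda_{\fp}M \simeq \Lambda_{\cV(\fp)}M$ of \eqref{eq:localformula} with the Koszul formula of Lemma~\ref{lem:koszulformula}, which realizes $\Lambda_\fp M$ as the sequential inverse limit
\[
\Lambda_{\fp}M \;\simeq\; \lim_s \bigl(\Sigma^{sd}\kos{R_{\fp}}{\fp^{(s)}} \otimes M\bigr),
\]
where $\fp=(p_1,\dots,p_n)$ and $d=-\sum|p_i|$. Since $R$ is Noetherian and $M$ is compact, $\pi_*M$ is a finitely generated module over the Noetherian graded local ring $\pi_*R_{\fp}$, so we are in a situation where underived $\fp$-adic completion is well-behaved.

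First I would compute the homotopy of each Koszul term by iterating the defining cofiber sequence \eqref{eq:koszulhigher}. For the single-generator case this yields, via the long exact sequence associated to $M \xrightarrow{p_i^s} \Sigma^{sd_i}M \to \kos{R_{\fp}}{p_i^s}\otimes M$, a $\pi_*M/p_i^s$ summand and an annihilator summand in one degree higher. Iterating over all $n$ generators produces a bounded Koszul-type spectral sequence with finitely generated entries converging to $\pi_*(\Sigma^{sd}\kos{R_{\fp}}{\fp^{(s)}}\otimes M)$, and whose $E^\infty$-page consists of finitely many Koszul homology groups $H_j(\fp^{(s)};\pi_*M)$.

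Next I would apply the Milnor short exact sequence
\[
0 \to {\lim_s}^{\!1}\,\pi_{*+1}\bigl(\Sigma^{sd}\kos{R_{\fp}}{\fp^{(s)}}\otimes M\bigr) \to \pi_*\Lambda_{\fp}M \to \lim_s \pi_*\bigl(\Sigma^{sd}\kos{R_{\fp}}{\fp^{(s)}}\otimes M\bigr) \to 0,
\]
and compare the pro-system on the right with the standard adic tower $\{\pi_*M/\fp^s\pi_*M\}_s$. The key ingredient is the standard pro-isomorphism (in the spirit of Greenlees--May) between the Koszul filtration and the $\fp$-adic filtration: the degree-zero contribution gives the quotient $\pi_*M/\fp^{(s)}\pi_*M$, cofinal with $\pi_*M/\fp^s\pi_*M$, while the higher Koszul homology groups $H_{j\ge 1}(\fp^{(s)};\pi_*M)$ form a pro-zero system. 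This last pro-triviality is precisely where the Artin--Rees lemma is used, via Noetherianity of $\pi_*R_{\fp}$ and finite generation of $\pi_*M$.

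The pro-isomorphism simultaneously produces the Mittag--Leffler condition (killing the $\lim^1$ term) and identifies the surviving limit with $(\pi_*M)^{\wedge}_{\fp}$, yielding the claimed natural isomorphism. Naturality is immediate from naturality of the Koszul construction. The main obstacle I anticipate is making the pro-isomorphism between the Koszul and adic towers completely precise in the graded Noetherian setting and tracking the Koszul self-duality shifts $\Sigma^{sd}$ cleanly; once those bookkeeping issues are handled the argument is formal.
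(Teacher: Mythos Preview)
Your proposal is correct and follows essentially the same route as the paper: both arguments reduce to the fact that for a finitely generated module over a Noetherian local ring, the Artin--Rees lemma forces derived and underived $\fp$-completion to coincide. The paper packages your Milnor-sequence and pro-isomorphism analysis into a single citation of the local homology spectral sequence $H_*^{\fp}(\pi_*M) \Rightarrow \pi_*\Lambda_{\fp}M$ from \cite[Prop.~3.19]{bhv2}, whose $E_2$-page is exactly the derived completion you are computing by hand via the Koszul tower; the collapse of that spectral sequence is precisely your pro-triviality of the higher Koszul homologies.
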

\begin{proof}
By \cite[Prop.~3.19]{bhv2}, there exists a strongly convergent local homology spectral sequence of signature
\[
H_{*}^{\fp}(\pi_*M) \implies \pi_*\Lambda_{\fp}M.
\]
Recall that, on the $E_2$-page, the algebraic local homology $H_{*}^{\fp}$ computes the derived $\fp$-completion, see \cite[Thm.~2.5]{gm_localhomology} or \cite[Prop.~3.14]{bhv}. Since $M$ is assumed to be compact, $\pi_*M$ is a finitely generated $\pi_*R_{\fp}$-module, so this spectral sequence collapses to the desired isomorphism by the Artin--Rees lemma. 
\end{proof}

\begin{lem}\label{lem:phantommap}
The natural map $\pi_*\Delta_{\fp}M \to \pi_*L_{\cV(\fp)}M$ is zero for any compact $M \in \Mod_{R_{\fp}}^{\omega}$.
\end{lem}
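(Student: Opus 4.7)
The plan is to factor the natural map through $\pi_*M$ using the fracture square, and then reduce to the injectivity of a completion map, which will follow from Krull's intersection theorem. Specifically, by Lemma \ref{lemp:pcfs} we have the homotopy pullback square
\[
\xymatrix{M \ar[d] \ar[r] & \Lambda_{\frak p}M \ar[d] \\ L_{\cV(\frak p)}M \ar[r]_-{\iota_{\fp}(M)} & L_{\cV(\frak p)}\Lambda_{\frak p}M}
\]
whose horizontal fibers are both equivalent to $\Delta_\fp M$. A standard property of pullback squares identifies the natural map $\Delta_\fp M \to L_{\cV(\fp)}M$ (realized as the fiber of $\iota_\fp(M)$) with the composition $\Delta_\fp M \to M \to L_{\cV(\fp)}M$, where the first arrow is the fiber of the top horizontal map $M \to \Lambda_\fp M$. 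It therefore suffices to show that $\pi_*\Delta_\fp M \to \pi_*M$ is the zero map.

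From the fiber sequence $\Delta_\fp M \to M \to \Lambda_\fp M$ this is equivalent to showing that the induced map $\pi_*M \to \pi_*\Lambda_\fp M$ is injective. Since $M \in \Mod_{R_\fp}^\omega$, Lemma \ref{lem:derivedcompletion} identifies $\pi_*\Lambda_\fp M$ with the (underived) $\fp$-adic completion $(\pi_*M)_\fp^\wedge$, and the map induced by $M \to \Lambda_\fp M$ is the canonical completion map $\pi_*M \to (\pi_*M)_\fp^\wedge$.

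Finally, since $R$ is Noetherian, $\pi_*R_\fp$ is a graded Noetherian local ring with unique graded maximal ideal $\fp\pi_*R_\fp$. Compactness of $M$ ensures that $\pi_*M$ is finitely generated over $\pi_*R_\fp$, so the graded version of Krull's intersection theorem gives $\bigcap_n \fp^n \pi_*M = 0$. Hence the completion map is injective, which yields the desired vanishing. The only potential obstacle is checking that the natural map in the statement really is the one coming from the fracture square and thus factors through $\pi_*M$; once this identification is made, the argument is essentially formal.
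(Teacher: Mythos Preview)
Your proof is correct and follows essentially the same approach as the paper: both use the fracture square of Lemma~\ref{lemp:pcfs} to factor the map through $\pi_*M$, reduce to the injectivity of $\pi_*M \to \pi_*\Lambda_{\fp}M$ via the fiber sequence, and then invoke Lemma~\ref{lem:derivedcompletion} together with Krull's intersection theorem. The only difference is the order of presentation.
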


\begin{proof}
By Krull's intersection theorem~\cite[Cor.~10.19]{am_commalg} and Lemma\nobreakspace \ref {lem:derivedcompletion}, the completion map $\pi_*M \to \pi_*\Lambda_{\fp}M$ is injective. Therefore, it follows from the long exact sequence associated to the fiber sequence $\Delta_{\fp}M \to M \to \Lambda_{\fp}M$ that the natural map $\pi_*\Delta_{\fp}M \to \pi_*M$ is zero. The pullback square of Lemma\nobreakspace \ref {lemp:pcfs} then yields the claim. 
\end{proof}

We are now ready to prove the main result of this section.

\begin{thm}\label{thm:chromaticsplitting}
The algebraic chromatic splitting hypothesis holds for any Noetherian $\bE_{\infty}$-ring spectrum $R$, i.e., for any  adjacent pair of primes $(\fp'\subset \fp)$, the natural map 
\[
\xymatrix{\iota_{M}\colon L_{\cV(\fp)}M \ar[r] & L_{\cV(\fp)}\Lambda_{\fp}M}
\] 
is split for all compact $M \in \Mod^{\omega}_{R_{\fp}}$ of type $\fp'$.
\end{thm}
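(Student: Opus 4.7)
My plan has three parts: (i) reduce the splitting of $\iota_M$ to showing its fiber map vanishes; (ii) use Lemma\nobreakspace \ref {lem:phantommap} to observe that this fiber map is phantom; and (iii) rule out such phantoms by combining the adjacent hypothesis with Brown--Comenetz duality at the prime $\fp'$.

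For (i), applying $\Hom_R(L_{\cV(\fp)} M, -)$ to the cofiber sequence $\Delta_\fp M \xrightarrow{f} L_{\cV(\fp)} M \xrightarrow{\iota_M} L_{\cV(\fp)} \Lambda_\fp M$ of Lemma\nobreakspace \ref {lemp:pcfs} reveals that $\iota_M$ admits a section precisely when $f$ is nullhomotopic. For (ii), Lemma\nobreakspace \ref {lem:phantommap} gives $f_* = 0$ on $\pi_*$. Since $\Mod_R$ is compactly generated by the unit $R$, every compact $C$ lies in $\Thick(R)$ and may be built from shifts of $R$ by finite cofiber sequences and retracts. A straightforward induction along such a construction, using the long exact sequences of mapping spectra, shows that $f$ also induces zero on $[C, -]$ for every compact $C$, so $f$ is a phantom map.

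For (iii), since $M$ is $\fp$-local and of type $\fp'$, we have $L_{\cZ(\fp)} M \simeq M$ and $\Gamma_{\cV(\fp')} M \simeq M$, whence Corollary\nobreakspace \ref {cor:adjacentdecomp} yields $L_{\cV(\fp)} M \simeq \Gamma_{\fp'} M$. The Noetherian hypothesis combined with the argument in the proof of Proposition\nobreakspace \ref {prop:bcduality} (inducting on the generators of $\fp'$) shows that $\pi_*(\Gamma_{\fp'} M \otimes R_{\fp'}\mm \fp')$ is finitely generated over $\pi_*R_{\fp'}/\fp'$; hence Proposition\nobreakspace \ref {prop:bcduality} provides an equivalence $\Gamma_{\fp'} M \simeq \Gamma_{\fp'}\bI_{\fp'}^2\Gamma_{\fp'} M$. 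The orthogonality between $\fp'$-torsion objects and their complementary localizations yields a natural isomorphism
\[
[\Delta_\fp M, \Gamma_{\fp'} M] \cong [\Gamma_{\fp'}\Delta_\fp M, \bI_{\fp'}^2\Gamma_{\fp'} M],
\]
and a direct check shows that this carries the phantom map $f$ to a phantom map into the Brown--Comenetz dual $\bI_{\fp'}^2\Gamma_{\fp'} M$. Lemma\nobreakspace \ref {lem:nophantomintobc} then forces the image to be null, whence $f \simeq 0$ and $\iota_M$ splits.

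The delicate point I expect is the verification in (iii) that the displayed isomorphism really does preserve the phantom-map property; this amounts to a short diagram chase, exploiting that the relevant counit maps factor every test map from a compact object in a way that lets the phantom hypothesis on $f$ take effect.
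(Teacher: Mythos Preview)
Your step (ii) contains a genuine gap. The claim that $\pi_*f = 0$ forces $f$ to be phantom via a thick-subcategory induction on $C$ is false: the condition ``$[C,f]=0$'' is \emph{not} closed under cofiber sequences in the variable $C$. If $A \to B \to C$ is a cofiber sequence with $[A,f]=[B,f]=0$ and $\alpha \in [C,X]$, the long exact sequence only tells you that $f_*\alpha$ maps to zero in $[B,Y]$, hence comes from $[\Sigma A, Y]$; nothing forces that lift to vanish. A concrete counterexample already in $\cD_{\Z}$: the Bockstein $\beta\colon \Z/p \to \Sigma\Z/p$ satisfies $\pi_*\beta = 0$, yet $[\Z/p,\beta]\neq 0$ since $\beta$ itself is a nonzero element of $\Ext^1_{\Z}(\Z/p,\Z/p)$, and $\Z/p$ is compact. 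So Lemma~\ref{lem:phantommap} applied once to the single module $M$ does not by itself produce a phantom map.

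The paper avoids this by a dualization move rather than induction. One first reduces by a thick subcategory argument to $M = R_{\fp}\mm\fp'$. Then, for a test map from a compact $C$, the composite $C \to \Delta_{\fp}(R_{\fp}\mm\fp') \to L_{\cV(\fp)}(R_{\fp}\mm\fp')$ is adjoint, using that $C$ is dualizable and that the relevant functors commute with tensoring by compacts, to a class in $\pi_*$ of the natural map $\Delta_{\fp}(C_{\fp}^{\vee}\mm\fp') \to L_{\cV(\fp)}(C_{\fp}^{\vee}\mm\fp')$. Now $C_{\fp}^{\vee}\mm\fp'$ is again compact in $\Mod_{R_{\fp}}$, so Lemma~\ref{lem:phantommap} applies to \emph{this} module and kills the class. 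The point is that the lemma must be invoked once for each compact test object, not once for $M$ and then propagated.

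Your step (iii) is correct in spirit but more elaborate than necessary, and the verification you flag as delicate is indeed not automatic. After the reduction to $M = R_{\fp}\mm\fp'$, Corollary~\ref{cor:adjacentdecomp} gives $L_{\cV(\fp)}M \simeq R_{\fp'}\mm\fp'$ on the nose, and Corollary~\ref{cor:nophantom} directly rules out phantom maps into $R_{\fp'}\mm\fp'$; there is no need to pass through $\Gamma_{\fp'}\bI_{\fp'}^2$ or to check that your displayed isomorphism preserves phantoms.
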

\begin{proof}
By a thick subcategory argument it suffices to consider $M = R_{\fp}\mm\fp'$. We first claim that, for any adjacent pair $(\fp'\subset \fp)$, there are equivalences
\begin{equation}\label{eq:ldequiv}
L_{\cV(\fp)}R_{\fp}\mm\fp' \simeq \Lambda_{\fp'}R_{\fp'}\mm\fp' \simeq R_{\fp'}\mm\fp',
\end{equation}
where the last equivalence follows from the fracture square for $R_{\fp'}\mm \fp'$. 

 To this end, note that $R_{\fp} \mm \fp' \simeq L_{\cZ(\fp)}R \mm \fp' \simeq L_{\cZ(\fp)}\Gamma_{\cV(\fp')}R \mm \fp'$, where the last equivalence is a consequence of \cite[Lem.~5.11(2)]{benson_local_cohom_2008}. Applying \protect \MakeUppercase {C}orollary\nobreakspace \ref{cor:adjacentdecomp} to $R\mm\fp'$ then yields equivalences
\begin{align*}
L_{\cV(\fp)}\kos{R_{\fp}}{\fp'} &\simeq L_{\cV(\fp)}L_{\cZ(\fp)}\Gamma_{\cV(\fp')}R\mm\fp'\\
& \simeq \Gamma_{\fp'} R \mm \fp' \\
& \simeq R_{\fp'} \mm \fp'. 
\end{align*}

We now claim that the map $\Delta_{\fp}R_{\fp}\mm\fp' \to L_{\cV(\fp)}R_{\fp}\mm\fp'$ is phantom. Indeed, let $C \to \Delta_{\fp}R_{\fp}\mm\fp'$ be a map from a compact module $C$. The composite $C \to \Delta_{\fp}R_{\fp}\mm\fp' \to L_{\cV(\fp)}R_{\fp}\mm\fp'$ is adjoint to a homotopy class $R \to \Delta_{\fp}C_{\fp}^{\vee}\mm\fp' \to L_{\cV(\fp)}C_{\fp}^{\vee}\mm\fp'$, which must be trivial by Lemma\nobreakspace \ref {lem:phantommap}. By \protect \MakeUppercase {C}orollary\nobreakspace \ref {cor:nophantom} there are no non-trivial phantom maps with target $R_{\fp'}\mm \fp'$, hence $\Delta_{\fp}R_{\fp}\mm \fp' \to L_{\cV(\fp)}R_{\fp}\mm \fp' \simeq R_{\fp'}\mm\fp'$ is zero, so $\iota_{R_{\fp}\mm \fp'}$ is split. 
\end{proof}

\begin{rem}
This argument has been inspired by an approach to the original chromatic splitting conjecture appearing in unpublished work by Sadofsky. 
\end{rem}

In the following we will consider natural maps
\[
\xymatrix{
\iota_n \colon R_{\fp'}\mm(\fp')^{(n)} \otimes M \ar[r] & R_{\fp'}\mm(\fp')^{(n)} \otimes \Lambda_{\fp}M
}
\]
for $M \in \Mod_{R_{\fp}}^{\omega}$, and $(\fp' \subset \fp)$ a pair of adjacent primes. These maps are split by \protect \MakeUppercase {T}heorem\nobreakspace \ref {thm:chromaticsplitting}, and we say that the splitting can be chosen compatibly if it is compatible with the maps in the associated tower obtained by varying $n$.  

The next two results give integral versions of  \protect \MakeUppercase {T}heorem\nobreakspace \ref {thm:chromaticsplitting}.

\begin{cor}\label{cor:localhomsplitting}
For any pair $(\fp' \subset \fp)$ of adjacent primes and $M \in \Mod_{R_{\fp}}^{\omega}$, if the splittings appearing in the previous theorem can be chosen compatibly, then the natural map
\[
\xymatrix{\widetilde \Lambda_{\fp'}M \ar[r] & \widetilde \Lambda_{\fp'}\Lambda_{\fp}M}
\]
is split.
\end{cor}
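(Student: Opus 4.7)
The strategy is to present both $\widetilde{\Lambda}_{\fp'}M$ and $\widetilde{\Lambda}_{\fp'}\Lambda_{\fp}M$ as tower limits indexed over $s$, identify the natural map between them with the limit of the $\iota_s$, and then use the compatibility hypothesis to pass the splittings through to the limit. First I would unwind the definition to write $\widetilde{\Lambda}_{\fp'}X = \Lambda_{\cV(\fp')}L_{\cZ(\fp')}X$ for any $X$. Applying Lemma~\ref{lem:koszulformula} to $\fp' = (p_1',\ldots,p_m')$, and using that $L_{\cZ(\fp')}$ is smashing (so its effect can be absorbed into the Koszul tensor factor), produces a natural equivalence
\[
\widetilde{\Lambda}_{\fp'}X \;\simeq\; \lim_{s}\bigl(\Sigma^{sd'}\, R_{\fp'}\mm(\fp')^{(s)} \otimes X\bigr),
\]
where $-d' = |p_1'| + \cdots + |p_m'|$. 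I would apply this to $X = M$ and $X = \Lambda_{\fp}M$.

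Next, by naturality of Lemma~\ref{lem:koszulformula} in the module variable, the map $\widetilde{\Lambda}_{\fp'}M \to \widetilde{\Lambda}_{\fp'}\Lambda_{\fp}M$ induced by the completion unit $M \to \Lambda_{\fp}M$ is identified (up to the suspension twists) with the limit of the tower of maps $\iota_s$. By Theorem~\ref{thm:chromaticsplitting} each individual $\iota_s$ admits a splitting, and the hypothesis of the corollary is precisely that such splittings can be chosen so as to assemble into a morphism of towers
\[
\{R_{\fp'}\mm(\fp')^{(s)} \otimes \Lambda_{\fp}M\}_s \;\longrightarrow\; \{R_{\fp'}\mm(\fp')^{(s)} \otimes M\}_s
\]
which is a levelwise right inverse to $\{\iota_s\}_s$. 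Taking limits of this compatible tower of splittings yields a morphism $\widetilde{\Lambda}_{\fp'}\Lambda_{\fp}M \to \widetilde{\Lambda}_{\fp'}M$; since the limit of a levelwise retraction is a retraction of the induced map on limits, this produces the desired splitting.

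The main subtlety, and what the hypothesis is designed to sidestep, is the $\infty$-categorical one: for the limit manoeuvre to work, \emph{compatibly} must refer to a genuine morphism of $\N^{\op}$-diagrams in $\Mod_R$, including all higher coherences between the $\sigma_s$, rather than merely a levelwise choice of sections. The existence of such a coherent lift of the pointwise splittings is a nontrivial condition in general; once granted, however, the corollary is formal, reducing to the identification of the tower above and the preservation of retractions under limits.
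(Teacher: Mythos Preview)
Your proposal is correct and follows essentially the same approach as the paper: both express $\widetilde{\Lambda}_{\fp'}$ via the Koszul limit formula of Lemma~\ref{lem:koszulformula}, identify the natural map with $\lim_s \iota_s$, and invoke the compatibility hypothesis to conclude that a limit of compatible split injections is split. The paper additionally spells out the identification $R_{\fp'}\mm(\fp')^{(n)} \otimes M \simeq L_{\cV(\fp)}(M \otimes R_{\fp}\mm(\fp')^{(n)})$ (using that $L_{\cV(\fp)}$ is smashing and that $R_{\fp}\mm(\fp')^{(n)}$ is compact, so it passes through $\Lambda_{\fp}$) in order to see precisely how each $\iota_n$ is an instance of the map in Theorem~\ref{thm:chromaticsplitting}; you rely on this implicitly, which is fine since it is already asserted in the paragraph preceding the corollary.
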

\begin{proof}
Using the fact that $L_{\cV(\fp)}$ is smashing, as well as an argument similar to that given in proving \eqref{eq:ldequiv}, one has that $R_{\fp'}\mm(\fp')^{(n)} \otimes M \simeq L_{\cV(\fp)}(M \otimes R_{\fp}\mm(\fp')^{(n)})$. This implies that, for $M$ as in the corollary and any $n\ge 0$, there is a natural map 
\[
\xymatrix@C=1.3em{R_{\fp'}\mm(\fp')^{(n)} \otimes M \simeq L_{\cV(\fp)}(M \otimes R_{\fp}\mm(\fp')^{(n)}) \ar[r]^-{\iota_n} & L_{\cV(\fp)}\Lambda_{\fp}(M \otimes R_{\fp}\mm(\fp')^{(n)}) \simeq R_{\fp'}\mm(\fp')^{(n)} \otimes \Lambda_{\fp}M,}
\]
where we have used the fact that $R_{\fp}\mm(\fp')^{(n)}$ is compact in $\Mod_{R_{\fp}}$, so that tensoring with $R_{\fp}\mm(\fp')^{(n)}$ commutes with $\Lambda_{\fp}$.

Local duality (\protect \MakeUppercase {T}heorem\nobreakspace \ref {thm:localduality}) and Lemma\nobreakspace \ref {lem:koszulformula} then give equivalences
\begin{equation}\label{eq:limitformula}
 \psi_N  \colon \widetilde \Lambda_{\fq}N \simeq \lim_{n} \Sigma^{nd} R_{\fq}\mm \fq^{(n)}\otimes N
\end{equation}
for any prime ideal $\fq$ and $N \in \Mod_R$. This leads to the following commutative diagram:
\[
\xymatrix{\widetilde \Lambda_{\fp'}M \ar[r]^-{\kappa} \ar[d]_{\psi_M}^{\sim} & \widetilde \Lambda_{\fp'}\Lambda_{\fp}M \ar[d]_{\sim}^{\psi_{\Lambda_{\fp}M}} \\
\lim_n \Sigma^{nd}R_{\fp'}\mm(\fp')^{(n)} \otimes M \ar[r]_-{\iota} & \lim_n \Sigma^{nd}R_{\fp'}\mm(\fp')^{(n)} \otimes \Lambda_{\fp}M.}
\]
The map $\iota$ is split because it is a limit of split maps by \protect \MakeUppercase {T}heorem\nobreakspace \ref {thm:chromaticsplitting} which are assumed to be compatible with the inverse system.
\end{proof}

We now exhibit a condition on $R$ such that the map of Corollary\nobreakspace \ref{cor:localhomsplitting} is split, which covers many examples of interest. We note that the following corollary simply proves the existence of some splitting; we do not claim that it is the limit of the individual splittings. 
\begin{cor}\label{cor:splitting2}
Assume that the ring spectrum $R_{\fq}$ is absolute Gorenstein for any $\fq \in \Spec(\pi_*R)$. For any pair $(\fp' \subset \fp)$ of adjacent primes and $M \in \Mod_{R_{\fp}}^{\omega}$, the natural map
\[
\xymatrix{\widetilde \Lambda_{\fp'}M \ar[r] & \widetilde \Lambda_{\fp'}\Lambda_{\fp}M}
\]
is split. 
\end{cor}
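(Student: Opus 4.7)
The plan is to apply the exact functor $\widetilde\Lambda_{\fp'}$ to the defining fibre sequence $\Delta_\fp M \to M \to \Lambda_\fp M$, producing the fibre sequence
\[
\widetilde\Lambda_{\fp'}\Delta_\fp M \xrightarrow{a} \widetilde\Lambda_{\fp'}M \xrightarrow{\kappa} \widetilde\Lambda_{\fp'}\Lambda_\fp M.
\]
In any stable category $\kappa$ admits a retraction if and only if $a$ is null-homotopic, so the claim reduces to proving $a=0$. I would first invoke the Gorenstein hypothesis at the prime $\fp'$: since $M \in \Mod_{R_\fp}^\omega$ implies $M_{\fp'} \in \Mod_{R_{\fp'}}^\omega$, Corollary \ref{cor:gorensteinphantom} guarantees that no non-trivial phantom map has target $\widetilde\Lambda_{\fp'}M \simeq \Lambda_{\fp'}M_{\fp'}$. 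It therefore suffices to show that $a$ itself is phantom.

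To verify phantomness, I would fix a compact $C \in \Mod_R^\omega$ and a map $g \colon C \to \widetilde\Lambda_{\fp'}\Delta_\fp M$, with the aim of showing $a \circ g = 0$. Using the tower description $\widetilde\Lambda_{\fp'}N \simeq \lim_n \Sigma^{nd} R_{\fp'}\mm(\fp')^{(n)} \otimes N$ supplied by \eqref{eq:limitformula}, the map $a$ is the homotopy limit of level maps $a_n$, each of which is the fibre of $\iota_n = \iota_\fp(M \otimes R_\fp\mm(\fp')^{(n)})$ in the notation of the proof of Corollary \ref{cor:localhomsplitting}. Since $M \otimes R_\fp\mm(\fp')^{(n)}$ is compact and of type $\fp'$, Theorem \ref{thm:chromaticsplitting} applies to split $\iota_n$, so every $a_n$ is null-homotopic.

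The most delicate point, and the one at which the Gorenstein hypothesis becomes essential, is the passage from these unsynchronised levelwise null-homotopies to a genuine null-homotopy of $a \circ g$. By Proposition \ref{prop:bclocalduality}, the target admits a Brown--Comenetz description $\widetilde\Lambda_{\fp'}M \simeq \Sigma^{\nu'}\bI_{\fp'}(\Gamma_{\fp'}M_{\fp'}^\vee)$, and the universal property of $\bI_{\fp'}$ identifies $\pi_0\Hom_R(C,\widetilde\Lambda_{\fp'}M)$ with the concrete $\Hom$-group of $\pi_*R$-linear functionals from $\pi_*(C \otimes \Gamma_{\fp'}M_{\fp'}^\vee)$ to the injective module $I_{\fp'}$, free of any $\lim^1$-contribution. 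Tracing $a \circ g$ through this identification, using the self-duality up to shift of the Koszul objects $R_{\fp'}\mm(\fp')^{(n)}$ together with the levelwise triviality of the $a_n$, the resulting functional vanishes, so $a \circ g = 0$. Hence $a$ is phantom and therefore zero, which splits $\kappa$. The principal obstacle is precisely this last step: eliminating the $\lim^1$-type obstruction to coherently combining the levelwise nullhomotopies, which is exactly what the Brown--Comenetz rigidity of the target—furnished by the Gorenstein hypothesis on $R_{\fp'}$—overcomes.
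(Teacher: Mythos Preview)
Your overall strategy matches the paper's exactly: apply $\widetilde\Lambda_{\fp'}$ to the fibre sequence, invoke Corollary~\ref{cor:gorensteinphantom} to reduce to showing the fibre map $a$ is phantom, and then confront the $\lim^1$-obstruction coming from the tower description \eqref{eq:limitformula}. The divergence is entirely in this last step.

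The paper does not attempt to trace $a\circ g$ through the Brown--Comenetz identification. Instead it first reduces to $M=R_{\fp}$ and dualises (exactly as at the end of the proof of Theorem~\ref{thm:chromaticsplitting}) to convert phantomness of $a$ into the vanishing of $\pi_*\phi$ for the analogous map with $C$ replaced by an arbitrary compact. It then kills $\lim^1$ by a direct Mittag--Leffler argument: since $\pi_*(R_{\fp'}\mm\fp'^{(n)})$ is degreewise finite-dimensional over the residue field $(\pi_*R)_{\fp'}/\fp'$, the Milnor sequence collapses and $\pi_*\widetilde\Lambda_{\fp'}R_{\fp} \cong \lim_n\pi_*(\Sigma^{nd}R_{\fp'}\mm\fp'^{(n)})$. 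The levelwise maps are then zero \emph{on homotopy groups} by Lemma~\ref{lem:phantommap} (not merely null-homotopic as spectra), which is exactly what this argument needs.

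Your alternative---using the Gorenstein/Brown--Comenetz description of the target a second time to write $[C,\widetilde\Lambda_{\fp'}M]$ as a $\lim^1$-free algebraic $\Hom$-group and then arguing that the functional corresponding to $a\circ g$ vanishes on each stage of the colimit defining $\Gamma_{\fp'}M_{\fp'}^\vee$---is plausible in outline, but the phrase ``tracing through'' conceals the actual content. You must verify that under the equivalence $\widetilde\Lambda_{\fp'}M \simeq \Sigma^{\nu'}\bI_{\fp'}(\Gamma_{\fp'}M_{\fp'}^\vee)$ the restriction of that functional to the $n$-th term of the Koszul colimit for $\Gamma_{\fp'}$ really does correspond to $a_n$ composed with the relevant projection; this requires matching the self-duality shifts of $R_{\fp'}\mm(\fp')^{(n)}$ against the two different shifts in Lemma~\ref{lem:koszulformula} and checking compatibility of the transition maps, none of which you carry out. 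The paper's Mittag--Leffler route sidesteps this bookkeeping and is correspondingly shorter.
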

\begin{proof}
First, note that it is enough to prove the claim for $M=R$. Applying $\widetilde{\Lambda}_{\fp'}$ to the fiber sequence $\Delta_{\fp}R_{\fp} \to R_{\fp} \to \Lambda_{\fp}R_{\fp}$, it then suffices by Corollary\nobreakspace \ref{cor:gorensteinphantom} to show that the map $\phi\colon\widetilde{\Lambda}_{\fp'}\Delta_{\fp}R_{\fp} \to \widetilde{\Lambda}_{\fp'}R_{\fp}\simeq \widetilde{\Lambda}_{\fp'}R$ is phantom. The same argument as the one given at the end of the proof of Theorem\nobreakspace \ref{thm:chromaticsplitting} reduces this to the claim that
\[
\xymatrix{\phi_*\colon \pi_*\widetilde{\Lambda}_{\fp'}\Delta_{\fp}C \ar[r] & \pi_*\widetilde{\Lambda}_{\fp'}C}
\]
is zero for all $C \in \Mod_{R_{\fp}}^{\omega}$. We will show this for $C=R_{\fp}$, the general case being an easy consequence. To this end, we use the limit description of \eqref{eq:limitformula}.

As seen in the proof of Proposition\nobreakspace \ref{prop:bcduality}, the graded module $\pi_*(R_{\fp'}\mm\fp')$ is finitely generated over $(\pi_*R_{\fp'})/\fp' \cong (\pi_*R)_{\fp'}/\fp'$, so we see inductively that $\pi_*(R_{\fp'}\mm\fp'^{(n)})$ is a degreewise finite-dimensional vector space over $(\pi_*R_{\fp'})/\fp'$ for all $n \ge 1$. Therefore, the tower $(\pi_*(\Sigma^{nd}R_{\fp'}\mm\fp'^{(n)}))_n$ satisfies the Mittag-Leffler condition, hence $\lim_n^1\pi_*(\Sigma^{nd}R_{\fp'}\mm\fp'^{(n)}) = 0$. The associated Milnor sequences thus give a commutative diagram
\[
\xymatrix{\pi_*\widetilde{\Lambda}_{\fp'}\Delta_{\fp}R_{\fp}  \cong \pi_*\lim_n(\Sigma^{nd}R_{\fp'}\mm\fp'^{(n)} \otimes \Delta_{\fp}R_{\fp}) \ar[r] \ar[d]_{\phi_*} & \lim_n\pi_{*}(\Sigma^{nd}R_{\fp'}\mm\fp'^{(n)} \otimes \Delta_{\fp}R_{\fp}) \ar[r] \ar[d] & 0\\
\pi_*\widetilde{\Lambda}_{\fp'}R_\fp \cong \pi_*\lim_n \Sigma^{nd}R_{\fp'}\mm\fp'^{(n)} \ar[r]_-{\sim} & \lim_n\pi_{*}(\Sigma^{nd}R_{\fp'}\mm\fp'^{(n)}) \ar[r] & 0.}
\]
The right vertical arrow is zero because each of the individual maps
\[
\xymatrix{\pi_{*}(R_{\fp'}\mm\fp'^{(n)} \otimes \Delta_{\fp}R_{\fp}) \ar[r] & \pi_{*}(R_{\fp'}\mm\fp'^{(n)})}
\]
is zero as was shown in the proof of Theorem\nobreakspace \ref{thm:chromaticsplitting}. This implies $\phi_* = 0$. 
\end{proof}

Observe that variants of \protect \MakeUppercase {T}heorem\nobreakspace \ref {thm:chromaticsplitting} and \protect \MakeUppercase {C}orollaries \nobreakspace \ref{cor:localhomsplitting} and \ref{cor:splitting2} hold without the $\fp$-locality assumption on $M$ if we use $\widetilde{\Lambda}$ in place of $\Lambda$, see \protect \MakeUppercase {R}emark\nobreakspace \ref {rem:localhomvariant}. We decided to give the version stated above since it is consistent with the work of Benson, Iyengar, and Krause.

\subsection{Applications and examples}

The algebraic chromatic splitting hypothesis is particularly relevant in categories $\Mod_R$ which are stratified by $\pi_*R$ in the sense of Benson, Iyengar, and Krause. We recall from~\cite{benson_stratifying_2011} that this means that the following two conditions hold:
\begin{enumerate}
	\item The category $\Mod_R$ satisfies the local to global principle. 
	\item The localizing subcategories $\Gamma_{\fp}\Mod_{R} = \Mod_{R_{\fp}}^{\fp-\text{tors}}$ are minimal for all $\fp \in \Spec(\pi_*R)$. 
\end{enumerate} 
In this case, the localizing subcategories of $\Mod_R$ are in bijection with subsets of $\Spec(\pi_*R)$ and are detected by the functors $\Gamma_{\fp}$. Informally speaking, the splitting of \protect \MakeUppercase {T}heorem\nobreakspace \ref {thm:chromaticsplitting} thus describes how $\Mod_R$ can be assembled from its indecomposable pieces $\Gamma_{\fp}\Mod_{R}$. In this subsection, we review three known classes of stratified categories $\Mod_R$ with $R$ Noetherian and give one explicit example. 

\begin{ex}
The easiest case is when $R$ is a discrete Noetherian commutative ring. It is known by work of Neeman~\cite{neemanchromtower} that the localizing subcategories of the derived category $\Mod_R$ of $R$-modules are classified by subsets of $\Spec(R)$. From this, it immediately follows that $\Mod_R$ is stratified by $R$, so the algebraic chromatic splitting hypothesis holds. This complements Neeman's work on the nilpotence theorem and the telescope conjecture by resolving the last remaining algebraic analogue of a prominent chromatic conjecture for $\Mod_R$. 
\end{ex}

The next example contains the previous one as a special case.

\begin{ex}
Suppose $R$ is an evenly concentrated Noetherian $\mathbb{E}_{\infty}$-ring spectrum with $\pi_*R$ regular. Dell'Ambrogio and Stanley show in \cite{affineweaklyregular} that the localizing subcategories of $\Mod_R$ are in bijection with subsets of $\Spec(\pi_*R)$, from which the stratification result can be deduced. 

As a concrete example, we can take $R = E_n$ to be Morava $E$-theory, a ring spectrum with coefficients $\bW\F_{p^n}\llbracket u_1,\ldots,u_{n-1} \rrbracket[u^{\pm 1}]$, where $\bW\F_{p^n}$ denotes the ring of Witt vectors on $\F_{p^n}$, the $u_i$ are in degree 0, and $u$ is in degree $-2$. By a theorem of Goerss and Hopkins \cite{goersshopkins_etheory}, $E_n$ admits the structure of an $\bE_{\infty}$-ring spectrum and thus satisfies the above conditions. 

Our results in this case should be contrasted with Devinatz's counterexample to the $BP$-analogue of the chromatic splitting conjecture, see~\cite{devinatz_counterex}. Recall that the Brown--Peterson spectrum $BP$ has coherent (but not Noetherian) graded commutative homotopy ring $\pi_*BP \cong \Z_{(p)}[v_1,v_2,\ldots]$, so that $I_n = (p,v_1,\ldots,v_{n-1})$ is a regular prime ideal in $BP_*$.  By \cite[Thm.~7.23]{bhv} and \cite[Prop.~3.7]{bs_centralizers} combined with Lemma\nobreakspace \ref {lem:koszulformula}, the chromatic localization functors $L_{n-1}$ and $L_{K(n)}$ restricted to $\Mod_{BP}$ are equivalent to the algebraic localization functors $L_{\cV(I_n)}$ and $\Lambda_{\cV(I_n)}L_{\cV(I_n)}$, respectively (see also \cite[Prop.~3.12]{bhv2}). We thus obtain a commutative square
\[
\xymatrix{L_{n-1}BP_p \ar[r] \ar[d]_{\sim} & L_{n-1}L_{K(n)}BP_p \ar[d]^{\sim} \\
L_{\cV(I_n)}BP_p \ar[r]_-{\iota} & L_{\cV(I_n)}\Lambda_{\cV(I_n)}L_{\cV(I_n)}BP_p}
\]
in which the top map is the one considered by Devinatz, whereas the bottom one is a version of the map $\iota$ of Lemma\nobreakspace \ref {lemp:pcfs} applied to pair of primes $((0) \subseteq I_n)$ and evaluated on the $p$-completion of $BP$. In this situation, Devinatz proves that $\iota$ is not a split injection whenever $n\ge 2$, i.e., as soon as the pair of primes $((0) \subseteq I_n)$ is not consecutive. 

This highlights the subtlety of the algebraic chromatic splitting hypothesis. 
\end{ex}

We now provide a motivating example for studying the algebraic chromatic splitting hypothesis for ring spectra.

\begin{ex}\label{ex:groups}
Let $C^*(BG,k)\simeq k^{hG}$ be the $\bE_{\infty}$-ring of cochains on the classifying space of a compact Lie group $G$ with $\pi_0G$ a finite $p$-group and with coefficients in a field $k$. Benson and Greenlees \cite{bg_stratifyingcompactlie}, extending earlier work of Benson, Iyengar, and Krause \cite{bik_finitegroups}, prove that $\Mod_{C^*(BG,k)}$ is stratified by the canonical action of $\pi_{-*}C^*(BG,k) \cong H^*(G,k)$. Assume further that $G$ is finite. Since $H^*(G,k)$ is Noetherian by a theorem due to Evens \cite{evens_cohomfinitegroups} and Venkov \cite{venkov_cohomfinitegroups}, and $C^*(BG,k)_{\fq}$ is absolute Gorenstein for any $\fq \in \Spec(H^*(G,k))$ by work of Benson and Greenlees, see \cite[Prop.~4.33]{bhv2}, \protect \MakeUppercase {C}orollary\nobreakspace \ref {cor:splitting2} applies to give a natural splitting of the map
\[
\xymatrix{\widetilde \Lambda_{\fp'}k_{\fp} \ar[r] & \widetilde\Lambda_{\fp'}\Lambda_{\fp}k_{\fp}}
\]
for any adjacent pair of primes $\fp',\fp \in \Spec(H^*(G,k))$. Note that, for $G$ a finite group, the local homology $\Lambda_{\fp}k$ is in some sense dual to the Rickard idempotents, see \cite[Sec.~10]{benson_local_cohom_2008}. 

Using the techniques of \cite{bhv2} and \cite{krausebenson_kg}, the chromatic splitting can be transported to the stable module category $\StMod_{kG}$ for any finite $p$-group $G$. Therefore, we obtain a qualitative description of how the stable module module category is locally built up from its indecomposable layers, at least for compact objects. 
\end{ex}

Finally, we conclude with an example which illustrates our results in the case of an explicit Noetherian $\bE_{\infty}$-ring spectrum. 

\begin{ex}
Let $G = \Z/2 \times \Z/2 = \langle g_1,g_2 \rangle$ be the Klein group and $k$ an algebraically closed field of characteristic $2$. The $\bE_{\infty}$-ring spectrum $C^*(BG,k)$ of cochains on $G$ with coefficients in $k$ has homotopy groups
\[
\pi_{-*}C^*(BG,k) \cong H^*(G,k) \cong k[\zeta_1,\zeta_2]
\]
with $\zeta_1$ and $\zeta_2$ generators in degree 1. Therefore, $C^*(BG,k)$ is Noetherian and there is an equivalence
\[
\Mod_{C^*(BG,k)} \simeq \Stable_{kG}
\]
as $G$ is a 2-group, where $\Stable_{kG}$ is the slight enlargement of the stable module category $\StMod_{kG}$ constructed by Benson and Krause~\cite{krausebenson_kg}. We may therefore work within the stable module category $\StMod_{kG}$; in particular, all our constructions can be restricted to this subcategory of $\Stable_{kG}$. Consider the prime ideal $\fp = (\zeta_1)$ generated by $\zeta_1$ and the consecutive pair of primes $((0) \subseteq (\zeta_1))$ in $\Spec(H^*(G,k))$. Any compact $M \in \Stable_{kG}$ is of type $0$ so we may take $M = k$. By \eqref{eq:ldequiv} and \cite[Sec.~4.3.2]{bik_mfo}, we obtain equivalences
\[
L_{\cV(\zeta_1)}k_{\zeta_1}  \simeq L_{\cZ(0)}k \simeq k(t) \oplus k(t),
\]
where $k_{\zeta_1} = L_{\cZ(\zeta_1)}k$ has cohomology $H^*(G,k_{\zeta_1}) \cong k[\zeta_1,\zeta_2]_{(\zeta_1)}$ and the two generators of $G$ act via the matrices
\[
g_1\mapsto \begin{pmatrix}
\Id & 0 \\
\Id & \Id
\end{pmatrix} 
\qquad 
g_2\mapsto \begin{pmatrix}
\Id & 0 \\
t& \Id
\end{pmatrix}.
\]
The fracture square of Lemma\nobreakspace \ref {lemp:pcfs} then takes the following form:
\[
\xymatrix{
k_{\zeta_1} \ar[d] \ar[r] & \Lambda_{\zeta_1}k_{\zeta_1} \ar[d] \\
k(t) \oplus k(t) \ar[r]_-{\iota_{\zeta_1}} & L_{\cV(\zeta_1)}\Lambda_{\zeta_1}k_{\zeta_1}.}
\]
We do not know of an elementary description of the objects in the right column of this square, but our main theorem~\protect \MakeUppercase {T}heorem\nobreakspace \ref {thm:chromaticsplitting} implies that $\iota_{\zeta_1}$ is a split injection. This provides a construction of $k_{\zeta_1}$ from the three remaining pieces with smaller support. 
\end{ex}

\biblio
\bibliography{duality}\bibliographystyle{alpha}
\end{document}